\documentclass{amsart}
\usepackage{lmodern}
\usepackage[english]{babel}
\usepackage{amsmath}
\usepackage{amssymb}
\usepackage{mathptmx} 
\usepackage{color}
\usepackage{graphicx}
\usepackage{caption}
\usepackage{subcaption}
\usepackage{float}
\usepackage[all,cmtip]{xy}
\usepackage{bm}
\usepackage{algorithm}
\usepackage{enumitem}
\newlist{steps}{enumerate}{1}
\setlist[steps, 1]{label = Step \arabic*:}
\usepackage{pinlabel}
%\usepackage{enumitem}
%%%% *** Do not adjust lengths that control margins, column widths, etc. ***
%BB: added package bm for bold symbols. Is it a bit too extreme?
%%%%%%%%%%% Defining Enunciations  %%%%%%%%%%%
\newtheorem{theorem}{\bf Theorem}[section]
\newtheorem{lemma}[theorem]{\bf Lemma}

\newtheorem{corollary}[theorem]{\bf Corollary}

%%% mrd: and some of my own
\newcommand{\rme}{\mathrm{e}}
\newcommand{\rmi}{\mathrm{i}}

\newcommand{\sign}{\operatorname{sign}}

\newcommand{\defeq}{\mathrel{\mathop:}=}

%%%%%%%%%%%%%%%%%%%%%%%%%%%%%%%%%%%%%%%%%%%%%%%

\begin{document}

%%%% Article title to be placed here
\title{All links are semiholomorphic}

\author{%%%% Author details
Benjamin Bode}
\date{}

\address{Instituto de Ciencias Matemáticas, CSIC, 28049 Madrid, Spain}
\email{benjamin.bode@icmat.es}
%%%%%%%%% Insert author address here
%\address{Osaka
%ben.bode.2013@my.bristol.ac.uk
%}

%%%% Subject entries to be placed here %%%%
%\subject{Mathematical Physics, Topology}

%%%% Keyword entries to be placed here %%%%
%\keywords{braid group action, }
%%% good choices?

%%%% Insert corresponding author and its email address}
%\corres{Benjamin Bode, Mark Dennis\\
%\email{benjamin.bode@bristol.ac.uk}, \email{mark.dennis@bristol.ac.uk}}
%BB: wrote Benjamin instead of Ben
%%% mrd: final final version, from now on all edits require comment

%%%% Abstract text to be placed here %%%%%%%%%%%%

%%%%%%%%%%%%%%%%%%%%%%%%%%%
%
%%%%%%%%%% Insert the texts which can accomdate on firstpage in the tag "fmtext" %%%%%

\maketitle
\begin{abstract}
Semiholomorphic polynomials are functions $f:\mathbb{C}^2\to\mathbb{C}$ that can be written as polynomials in complex variables $u$, $v$ and the complex conjugate $\overline{v}$. We prove the semiholomorphic analogoue of Akbulut's and King's \emph{All knots are algebraic}, that is, every link type in the 3-sphere arises as the link of a weakly isolated singularity of a semiholomorphic polynomial. Our proof is constructive, which allows us to obtain an upper bound on the polynomial degree of the constructed functions.
\end{abstract}
\section{Introduction}
In \cite{ak} Akbulut and King proved that ``All knots are algebraic''. Since the word \emph{algebraic} carries several different meanings, their title could cause confusion. Besides links that are built from rational tangles as studied by Conway \cite{conway} the term ``algebraic link'' is nowadays usually reserved for links of isolated singularities of complex hypersurfaces. These are known to be unions of certain iterated cables of torus links. So clearly not all knots are algebraic in this sense. If the word ``algebraic'' is interpreted as ``an algebraic set'', then Akbulut's and King's title is a true statement, but also a classical and well-known result in algebraic geometry, see for example the Nash-Tognoli theorem \cite{bcr:1998real}.

The correct interpretation of Akbulut's and King's ``algebraic knots'' lies somewhere between the notion of the link of an isolated singularity and an algebraic set. Consider a real polynomial map $f:\mathbb{R}^4\to\mathbb{R}^2$. A critical point of $f$ is a point $p\in\mathbb{R}^4$, where the real Jacobian matrix $Df(p)$ of $f$ does not have full rank. We say that the origin $O\in\mathbb{R}^4$ is a \textit{weakly isolated singularity} of $f$, if $f(O)=0$, $Df(O)=0$ (i.e., the $2\times4$-matrix with zero entries) and there is some neighbourhood $U$ of the origin such that $U\backslash\{O\}\cap f^{-1}(0)$ contains no critical points of $f$. Hence $f$ is allowed to have a line of critical points pass through the origin, but the origin should be an isolated intersection point of $f^{-1}(0)$ and the critical set.

Every weakly isolated singularity can be associated with a link, since the link type of the intersection of $f^{-1}(0)$ and the 3-sphere $S_{\rho}^3$ of radius $\rho$ does not depend on the sufficiently small radius $\rho>0$. We call $L_f=f^{-1}(0)\cap S_{\rho}^3$ the link of the singularity.

Akbulut and King prove that every link in the 3-sphere arises as the link of a weakly isolated singularity of a real polynomial map $f:\mathbb{R}^4\to\mathbb{R}^2$. Thus their interpretation of the term ``algebraic'' does involve singularities, but of real polynomial maps instead of complex ones, and their definition of an isolated singularity is (as the name suggests) so weak that there is no restriction in the type of links that can be obtained this way. 

Note that by composing an inverse stereographic projection $\mathbb{R}^3\to S^3_{\rho}$ with $f$ and clearing the denominator we obtain a real polynomial map on $\mathbb{R}^3$ whose variety is isotopic to the link of the singularity $L_f$ of $f$, so that Akbulut's and King's proof also establishes $L_f$ as an algebraic set in $\mathbb{R}^3$.

%Akbulut's and King's proof relies on approximation results in several steps, which means that first, it is difficult to construct the corresponding polynomials algorithmically and second, we cannot expect to find an upper bound on the degrees of these polynomials (say in terms of topological data of the desired link).

In \cite{bode:ralg}, we discuss a construction of weakly isolated singularities for certain links. It produces functions $f:\mathbb{C}^2\to\mathbb{C}$ that can be written as polynomials in complex variables $u$, $v$ and the complex conjugate $\bar{v}$. Hence they are holomorphic with respect to one complex variable, but not necessarily with respect to the other. We call such functions \textit{semiholomorphic}. They form an interesting family of mixed polynomials \cite{oka}, lying between the complex and the real setting. However, the construction in \cite{bode:ralg} only works for links that satisfy certain symmetry constraints, such as being the closure of a 2-periodic braid. This is necessary in order to obtain polynomials with the desired properties rather than more general real analytic maps.

In this paper we offer a constructive proof of Akbulut's and King's result, an algorithm that takes a braid word as input and produces a polynomial with a weakly isolated singularity, whose link is ambient isotopic to the closure of the given braid. Furthermore, all of the constructed polynomials are semiholomorphic.

\begin{theorem}\label{thm:ak}
Algorithm 1 (outlined in Section 3) constructs for any given braid $B$ on $s$ strands a semiholomorphic polynomial $f:\mathbb{C}^2\to\mathbb{C}$ with $\deg_u(f)=s$, with a weakly isolated singularity at the origin and $L_f$ ambient isotopic to the closure of $B$.
\end{theorem}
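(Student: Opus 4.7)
The plan is to realise $B$ as the braid of roots in $u$ of a monic degree-$s$ polynomial whose coefficients are periodic functions of a parameter $t$ along a circle, then interpret $\rme^{\pm \rmi t}$ as the variables $v$ and $\bar{v}$ to obtain a semiholomorphic polynomial agreeing with this construction on $|v|=1$, and finally insert a radial rescaling to force a weakly isolated singularity at the origin. The construction should be algorithmic: one needs explicit control over Fourier truncation and rescaling parameters.

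Concretely, I would first fix a smooth geometric representative of $B$ as an unordered $s$-multicurve $\{u_j(t)\}_{j=1}^s$ in $\mathbb{C}$, parametrised by $t \in [0, 2\pi]$ and $2\pi$-periodic as a multiset. The elementary symmetric functions $e_k(u_1(t),\dots,u_s(t))$ are then single-valued $2\pi$-periodic smooth functions of $t$; truncating their Fourier expansions at sufficiently high order yields trigonometric polynomials $Q_k(\rme^{\rmi t}, \rme^{-\rmi t})$ such that the roots in $u$ of $u^s + \sum_{k=1}^s (-1)^k Q_k\,u^{s-k}$ still trace out a braid ambient isotopic to $B$, by continuity of roots under coefficient perturbations. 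Substituting $\rme^{\rmi t} \mapsto v$ and $\rme^{-\rmi t} \mapsto \bar{v}$ in each $Q_k$ produces semiholomorphic polynomials $P_k(v, \bar{v})$, and for a sufficiently large integer $K$ I set
\[
f(u, v, \bar{v}) \;\defeq\; u^s + \sum_{k=1}^s (-1)^k (v\bar{v})^{kK}\,P_k(v, \bar{v})\,u^{s-k}.
\]
This is manifestly semiholomorphic with $\deg_u f = s$. The change of variables $u = (v\bar{v})^K w$ factors $f$ as $(v\bar{v})^{sK} f_0(w, v, \bar{v})$, where $f_0 \defeq w^s + \sum_k (-1)^k P_k\,w^{s-k}$ is the unscaled polynomial from the previous step; so for $v \neq 0$ the zero set of $f$ consists of $s$ sheets $u = (v\bar{v})^K w_j(v, \bar{v})$, tracing the braid $B$ on every torus $|v| = \rho$ small. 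Since $|u| \leq C |v|^{2K}$ along the zero set, for $\rho$ sufficiently small the intersection $f^{-1}(0) \cap S^3_\rho$ projects homeomorphically onto $|v|=\rho$ and is therefore ambient isotopic to the braid closure of $B$.

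The two items left to verify are that the origin is a weakly isolated singularity and that the link type on small spheres is indeed $B$. At the origin, $f$ reduces to $u^s$ along $\{v=0\}$, so $f(O) = 0$ and, for $K \geq 1$, every first-order partial derivative of $f$ vanishes at $O$. The main obstacle is to show that no other critical point of $f$ lies on $f^{-1}(0)$ in a neighbourhood of $O$: this requires the strand parametrisation $w_j(v, \bar{v})$ to have no collisions for $v$ in a punctured neighbourhood of $0$, equivalently that the $w$-discriminant of $f_0$, viewed as a semiholomorphic polynomial in $v, \bar{v}$, be nonzero there. The algorithm therefore has to choose the Fourier truncation degree, the rescaling exponent $K$, and the original strand parametrisation compatibly, so that the strands of $f_0$ stay uniformly separated on a solid torus containing $|v| = 1$ and the scaling by $(v\bar{v})^K$ dominates any residual perturbation as $|v|\to 0$. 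A quantitative version of this separation estimate is what simultaneously produces the weakly isolated condition and the explicit polynomial-degree bound promised in the abstract.
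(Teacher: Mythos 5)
Your construction has a structural gap that cannot be fixed by tuning $K$ or the Fourier truncation. After the change of variables $u = (v\bar v)^K w$, the shape of the zero set on the torus $|v|=\rho$ is governed entirely by $f_0 = w^s + \sum_k (-1)^k P_k(v,\bar v)\,w^{s-k}$, and the factor $(v\bar v)^K$ plays no further role. But $P_k(v,\bar v)$ is a genuine polynomial in $v$ and $\bar v$, not a function of $t=\arg(v)$ alone, so as $\rho\to 0$ each $P_k(\rho\rme^{\rmi t},\rho\rme^{-\rmi t})$ tends to the constant $P_k(0,0)$. Consequently $f_0$ degenerates to a polynomial in $w$ with constant coefficients, whose $s$ roots do not move as $t$ varies: for all sufficiently small $\rho$ the braid traced on $|v|=\rho$ is trivial. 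If $B$ closes to a nontrivial link, your family of braids therefore passes from $B$ at $\rho=1$ to the trivial braid near $\rho=0$, and since braid type is a discrete invariant this forces root collisions somewhere in $0<\rho<1$; if instead you push all collisions outside a small punctured neighbourhood of the origin, the link of the singularity you obtain is the unlink, not the closure of $B$. Your sentence about the rescaling dominating ``any residual perturbation'' misidentifies the mechanism: the rescaling is absorbed into the change of variable and cannot stop the coefficients of $f_0$ from flattening out as $|v|\to 0$.

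The underlying obstruction is that for the root braid to be constant in $\rho$ the polynomial must be (at least asymptotically) radially weighted homogeneous, which forces the coefficients of $g(u,\rme^{\rmi t})=\sum_j a_j(\rme^{\rmi t})u^j$ to depend only on $t$; polynomiality in $v,\bar v$ after the weighted rescaling then forces the Fourier frequencies of $a_j$ to have a fixed parity, which geometrically corresponds to the \emph{squared} braid rather than $B$. This is precisely the obstacle the paper engineers around. It uses real-valued trigonometric polynomials $\tilde F_C$ so that $g$ traces the singular braid $B_{sing}^2$ (the factor $2$ built into its argument is what enforces the parity), producing a radially weighted homogeneous $p_k$ whose singularity at the origin is \emph{not} weakly isolated because the singular crossings sweep out lines of critical points. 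It then adds a single higher-order term $r^m A(\rme^{\rmi t})$, $m>2ks$, above the Newton boundary; the interpolation conditions on $A$ and $\partial_t\arg A$ at the crossing times resolve the first-half crossings into non-crossings and the second-half crossings into classical crossings of prescribed signs, and because the resolution depends only on the sign data (not on $r$) the resulting braid stabilises to a representative of $B$ on every sufficiently small torus. Your proposal replaces this entire mechanism with unscaled polynomial coefficients, and that replacement is exactly what fails.
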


The algorithm is based on trigonometric interpolation, which allows us to prove upper bounds on the polynomial degrees of the constructed functions.

\begin{theorem}\label{thm:bound}
Let $B$ be a braid with $s$ strands, $\ell$ crossings and let $\mathcal{C}$ denote the set of components of its closure, which by assumption is not the unknot. Let $s_C$ denote the number of strands of the component $C\in\mathcal{C}$. Then the degree of the polynomial $f$ that Algorithm 1 constructs from the input $B$ is at most:
\begin{equation}
\deg(f)\leq s\ell(2+s)+1+\underset{C\in\mathcal{C}}{\sum}s_C^2\ell.
\end{equation}
\end{theorem}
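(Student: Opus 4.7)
The plan is to execute Algorithm~1 stage by stage and track the contribution to the total $(u,v,\bar{v})$-degree of $f$; the two summands in the bound arise from two distinct stages of the construction. First parametrize each component $C\in\mathcal{C}$ of the closure of $B$ as a single smooth closed curve $\zeta_C:\mathbb{R}/2\pi s_C\mathbb{Z}\to\mathbb{C}$ whose restrictions to successive intervals of length $2\pi$ realise the $s_C$ strands of $C$. Algorithm~1 samples $\zeta_C$ at a density sufficient to resolve every crossing of $B$ and applies trigonometric interpolation, producing a trigonometric polynomial $\tilde\zeta_C$ in the variable $e^{\rmi t/s_C}$ whose interpolation degree I would track explicitly as a function of $s_C$ and $\ell$.

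Next form, for each component,
\[
g_C(u,t)=\prod_{k=0}^{s_C-1}\bigl(u-\tilde\zeta_C(t+2\pi k)\bigr),
\]
whose coefficients are elementary symmetric in the $s_C$ translates of $\tilde\zeta_C$ and hence invariant under $t\mapsto t+2\pi$: only exponents of $e^{\rmi t/s_C}$ divisible by $s_C$ appear, so the coefficients are in fact trigonometric polynomials in $e^{\rmi t}$. A careful degree count combining the $s_C$-fold product with this $\mathbb{Z}/s_C$-collapse bounds the $e^{\rmi t}$-degree of the coefficients of $g_C$ by $s_C^2\ell$, and assembling $g=\prod_{C\in\mathcal{C}}g_C$ yields a polynomial of $u$-degree $s$ whose coefficients have $e^{\rmi t}$-degree at most $\sum_{C}s_C^2\ell$. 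Substituting $e^{\rmi t}\mapsto v/\sqrt{v\bar{v}}$ and clearing the $|v|$-denominators by multiplication by appropriate powers of $v\bar{v}$ turns $g$ into a semiholomorphic polynomial whose total degree equals the trigonometric degree of $g$. Finally add the perturbation prescribed by Algorithm~1 — a polynomial in $u$ and $v\bar{v}$ chosen to enforce weak isolation of the singularity at the origin without altering the ambient isotopy class of the link on small spheres — and verify that its degree is at most $s\ell(2+s)+1$. Summing the two contributions yields the stated bound.

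The main obstacle will be the degree count in the second stage: the exact factor $s_C^2\ell$ per component requires the full strength of the $\mathbb{Z}/s_C$-symmetry to collapse powers of $e^{\rmi t/s_C}$ back to $e^{\rmi t}$, so the interpolation density chosen in Stage~1 must be calibrated to precisely the level at which this collapse yields the advertised bound. A secondary technical point is the perturbation step, which must kill critical points of $g$ in a neighbourhood of the origin while staying within the budget $s\ell(2+s)+1$; this will involve explicit estimates for how much radial enlargement is needed to dominate the non-radial part of $g$ away from its zero set.
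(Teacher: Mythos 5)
Your decomposition of the bound into the two displayed summands does not match where they actually come from, and the degree count for $g$ contains an error that makes the first summand appear where it should not. After the $\mathbb{Z}/s_C$-collapse, each $g_C$ has $\rme^{\rmi t}$-degree on the order of $s_C\ell$, not $s_C^2\ell$: the $s_C$-fold product of trigonometric polynomials in $\rme^{2\rmi t/s_C}$ of degree about $s_C\ell/2$ reaches degree about $s_C^2\ell/2$ in $\rme^{2\rmi t/s_C}$, but the symmetry under $t\mapsto t+2\pi$ kills all exponents not divisible by $s_C$, so the surviving degree in $\rme^{2\rmi t}$ is about $s_C\ell/2$, hence $s_C\ell$ in $\rme^{\rmi t}$. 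Summing over components gives $\deg(g)\lesssim s\ell$, and consequently $\deg(p_k)=2ks\lesssim s(\ell+1)$. Neither of the displayed summands comes from this stage.

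The entire bound is governed by the perturbation term $r^m A(\rme^{\rmi t})$ (Step 5 and Step 6), which your proposal does not analyze. This term does not depend on $u$ and is not a polynomial in $v\bar{v}$ alone; it genuinely depends on $\arg v$ through $A$, which is the whole point. The degree $m$ of $f$ must dominate both $\deg(p_k)$ (easy, of order $s\ell$) and $\deg(A)$, and it is the latter that is the binding constraint. $A(\rme^{\rmi t})=\tilde A(\rme^{2\rmi t})\cos(t)$ with $\tilde A$ obtained by Hermite-type trigonometric interpolation through one data point (value plus derivative of argument) per singular crossing of $B_{sing}$, so $\deg(\tilde A)$ equals the number $\ell'$ of such crossings. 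Bounding $\ell'$ requires the crossing-count estimates from \cite{bodepoly}: at most $(s_C+1)s_C\ell$ intracomponent crossings for each $C$ and at most $\ell s_C s_{C'}$ crossings between distinct components $C,C'$, giving $\ell'\leq \sum_C \frac{1}{2}s_C^2\ell + s\ell(1+s/2)$ and hence $\deg(A)=2\ell'+1\leq \sum_C s_C^2\ell + s\ell(2+s)+1$. That is where both summands in the statement come from. Without the singular-crossing count, there is no route to the claimed bound, so this is a genuine gap rather than a stylistic difference.
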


\begin{corollary}\label{cor}
If the closure of $B$ is a non-trivial knot, the degree of the polynomial $f$ constructed by Algorithm 1 is bounded by
\begin{equation}
\deg(f)\leq 2s\ell(s+1)+1.
\end{equation}
\end{corollary}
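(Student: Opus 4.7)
The plan is to derive the corollary as an immediate specialization of Theorem~\ref{thm:bound}. Since the closure of $B$ is assumed to be a non-trivial knot, it has exactly one component, so the set $\mathcal{C}$ is a singleton $\{C\}$, and all $s$ strands of $B$ must belong to this single component, giving $s_C = s$. Consequently the sum $\sum_{C\in\mathcal{C}} s_C^2 \ell$ collapses to the single term $s^2\ell$.

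Substituting into the bound from Theorem~\ref{thm:bound}, I would compute
\begin{equation*}
\deg(f) \leq s\ell(2+s) + 1 + s^2\ell = 2s\ell + s^2\ell + s^2\ell + 1 = 2s\ell(s+1) + 1,
\end{equation*}
which is exactly the claimed inequality. There is no real obstacle here: the only point to verify carefully is that the hypothesis ``non-trivial knot'' ensures both that Theorem~\ref{thm:bound} applies (since its statement excludes the unknot) and that $|\mathcal{C}|=1$ with $s_C = s$. Everything else is algebraic simplification.
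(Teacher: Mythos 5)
Your proof is correct and takes essentially the same route as the paper: specialize Theorem~\ref{thm:bound} to the knot case, where $|\mathcal{C}|=1$ and $s_C=s$, and simplify $s\ell(2+s)+1+s^2\ell = 2s\ell(s+1)+1$. The only notable point you correctly flag is that the ``non-trivial knot'' hypothesis both forces $|\mathcal{C}|=1$ with $s_C=s$ and excludes the unknot so that Theorem~\ref{thm:bound} applies.
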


We would also like to point out that there is a stronger notion of isolation of singularities of real polynomial maps. We say that the origin is an \emph{isolated singularity} if $f(O)=0$, $Df(O)=0$ and $U\backslash\{O\}$ contains no critical points of $f$. Typically the set of critical points of $f$ is 1-dimensional, so that polynomials with isolated singularities are very rare. The links that arise from isolated singularities, the \emph{real algebraic links}, have not been classified yet and are conjectured to be equal to the set of fibered links \cite{benedetti}. Some constructions of isolated singularities have been put forward to make progress on this conjecture \cite{bode:ralg, looijenga, perron}, but the family of links that are known to be real algebraic is still comparatively small. A construction similar to Algorithm 1, which produces isolated singularities for a large families of fibered links will be subject of a future paper.

Our algorithm can be interpreted as a deformation of a Newton degenerate mixed function in the sense of \cite{oka} or \cite{eder}. Our results can thus be viewed in the broader context of the question: How do deformations of real polynomial mappings affect the topology of their zeros close to singular points? Some work has been done on this question regarding so-called inner  Newton non-degenerate mixed functions \cite{eder} and complex polynomial mappings \cite{fukui, king, saeki}, but the problem is still wide open in the general setting.

The remainder of this paper is structured as follows. Section \ref{sec2} reviews some useful background and introduces notation and conventions. In Section \ref{sec:algo} we give an overview of the algorithm that constructs weakly isolated singularities for any given link. The individual steps of the algorithm are discussed in Section \ref{sec:steps}, where we illustrate that all the steps can indeed be performed algorithmically. We prove our main result Theorem \ref{thm:ak} in Section \ref{sec:proof} by proving that the described algorithm constructs weakly isolated singularities for any given link. The bounds on the polynomial degrees are shown in Section \ref{sec:bounds}.
\ \\
\textbf{Acknowledgments:} The author is grateful to Raimundo Nonato Araújo dos Santos and Eder Leandro Sanchez Quiceno for discussions and feedback on the paper. This work is supported by the European Union’s Horizon 2020 Research and Innovation Programme under the Marie Sklodowska-Curie grant agreement No 101023017.

\section{Background}\label{sec2}

Semiholomorphic polynomials are a special type of \textit{mixed polynomials} as introduced by Oka \cite{oka}. In the dimensions that we are interested in, the set of mixed polynomials $f:\mathbb{C}^2\to\mathbb{C}$ consists of polynomials in two complex variables $u$ and $v$, and their complex conjugates, $\overline{u}$ and $\overline{v}$, so that $f$ takes the form
\begin{equation}
f(u,v)=\underset{i,j,k,\ell}{\sum}c_{i,j,k,\ell}u^i\overline{u}^jv^k\overline{v}^\ell,
\end{equation}
with all but finitely many $c_{i,j,k,\ell}\in\mathbb{C}$ equal to zero. Note that every polynomial map from $\mathbb{R}^4$ to $\mathbb{R}^2$ can be written as a mixed polynomial. A mixed polynomial is semiholomorphic if and only if $c_{i,j,k,\ell}\neq0$ implies $j=0$. Thus a semiholomorphic polynomial is holomorphic with respect to the variable $u$, but not necessarily with respect to the variable $v$.

Semiholomorphic polynomials lend themselves to constructions like the one discussed in this paper for two reasons. First, the holomorphicity in one variable grants us a certain rigidity and control over the behaviour of zeros that is usually associated with complex functions. For instance, we know that for any fixed value of $v=v_*$ the number of zeros of $f(\cdot,v_*)$ is equal to its degree. The second advantage of working with semiholomorphic polynomials is that it is comparatively easy to prove that a point is a regular point, i.e., that the real Jacobian matrix has full rank, and consequently to prove that a singularity is weakly isolated. It suffices to show that the origin $O$ is the only zero of $f$ where $\tfrac{\partial f}{\partial u}$ vanishes.

As for complex polynomials there is the notion of a Newton polyhedron for mixed polynomials \cite{oka}. For every weight vector $P=(p_1,p_2)\in\mathbb{N}^2$ we can define the radially weighted degree of a mixed monomial $M=c_{i,j,k,\ell}u^i\overline{u}^jv^k\overline{v}^\ell$ with respect to $P$ as $d(P;M):=p_1(i+j)+p_2(k+\ell)$. A mixed polynomial $f$ is \textit{radially weighted homogeneous} of degree $d(P;f)$ if there is a weight vector $P$ such that all non-zero monomials $M$ in $f$ have the same radially weighted degree $d(P;M)=d(P;f)$ with respect to $P$.

Our algorithm is based on braids and the fact that every link is the closure of some braid \cite{alexander:1923lemma}. A braid on $s$ strands is a collection of $s$ disjoint curves $(u_j(t),t)\subset\mathbb{C}\times [0,2\pi]$, $j=1,2,\ldots,s$, parametrized by their height coordinate $t$ going from $0$ to $2\pi$. The functions $u_j:[0,2\pi]\to\mathbb{C}$ are assumed to be smooth and to satisfy that for every $j\in\{1,2,\ldots,s\}$ there is a unique $i\in\{1,2,\ldots,s\}$ such that $u_j(2\pi)=u_i(0)$.

Identifying the $t=0$- and the $t=2\pi$-plane results in a link in $\mathbb{C}\times S^1$, the closed braid. Embedding the open solid torus $\mathbb{C}\times S^1$ as an untwisted neighbourhood of the unknot in the 3-sphere $S^3$ defines a link in $S^3$, the closure of the braid, whose link type is well-defined.

Projecting curves via the map $(u,t)\mapsto(\text{Re}(u),t)$ into $\mathbb{R}\times[0,2\pi]$ results in $s$ intersecting curves. A \textit{braid diagram} is such a projection where every intersection is transverse and involves exactly two strands. We keep track of the information about the $\text{Im}(u)$-coordinate of these two strands at the crossing by deleting the strand with the larger $\text{Im}(u)$-coordinate in a neighbourhood of the crossing. The strand with the smaller $\text{Im}(u)$-coordinate is thus the overcrossing strand. This is an arbitrary choice and in previous papers we have not been consistent with our choices (although consistent in each individual paper). Changing this convention only means that several signs throughout this paper need to be reversed. Obviously, the results are not affected by this.

If two strands cross at $t=t_k$ and for all small $\varepsilon>0$ the overcrossing strand has smaller $\text{Re}(u)$-coordinate for all $t\in(t_k-\varepsilon,t_k)$, this crossing is a positive crossing. Non-positive crossings are negative.

The braid isotopy classes of braids on $s$ strands form a group generated by the Artin generators $\sigma_j$, $j=1,2,\ldots,s-1$, where $\sigma_j$ denotes a positive crossing between the strand with the $j$th smallest $\text{Re}(u)$-coordinate (the ``$j$th strand'') and the strand with the next larger $\text{Re}(u)$-coordinate (the $j+1$th strand). The square $B^2$ of a braid $B$ is thus the double repeat of its braid word, two copies of the same braid concatenated.
%, or in terms of parametrizations $(u_j(2t),t)$, $j=1,2,\ldots,s$, where the original braid $B$ is given by $(u_j(t),t)$, $j=1,2,\ldots,s$.

A braid diagram can be interpreted as a \textit{singular braid} on $s$ strands, that is, a collection of curves that are allowed to intersect transversely in finitely many points, whose image under projection map $(u,t)\mapsto(\text{Re}(u),t)$ into $\mathbb{R}\times[0,2\pi]$ results in a braid diagram (see Figure \ref{fig:diag}). The singular braid monoid is generated by the Artin generators, their inverses $\sigma_j^{-1}$ and $\tau_{j}$, $j=1,2,\ldots,s-1$, which correspond to intersection points between the $j$th strand and the $j+1$th strand. Thus a singular braid in $\mathbb{R}\times[0,2\pi]$ (i.e., it is a braid diagram) is represented by a word that only consists of $\tau_j$s.

The projection map that associates to every geometric braid a braid diagram can thus be understood as a function from the set of braid words to the set of singular braid words mapping a generator $\sigma_j^{\varepsilon}$, $\varepsilon\in\{\pm 1\}$ to $\tau_j$, regardless of the sign $\varepsilon$.

\begin{figure}
\centering
\labellist
\large
\pinlabel a) at 100 1200
\pinlabel b) at 1500 1200
\endlabellist
\includegraphics[height=5cm]{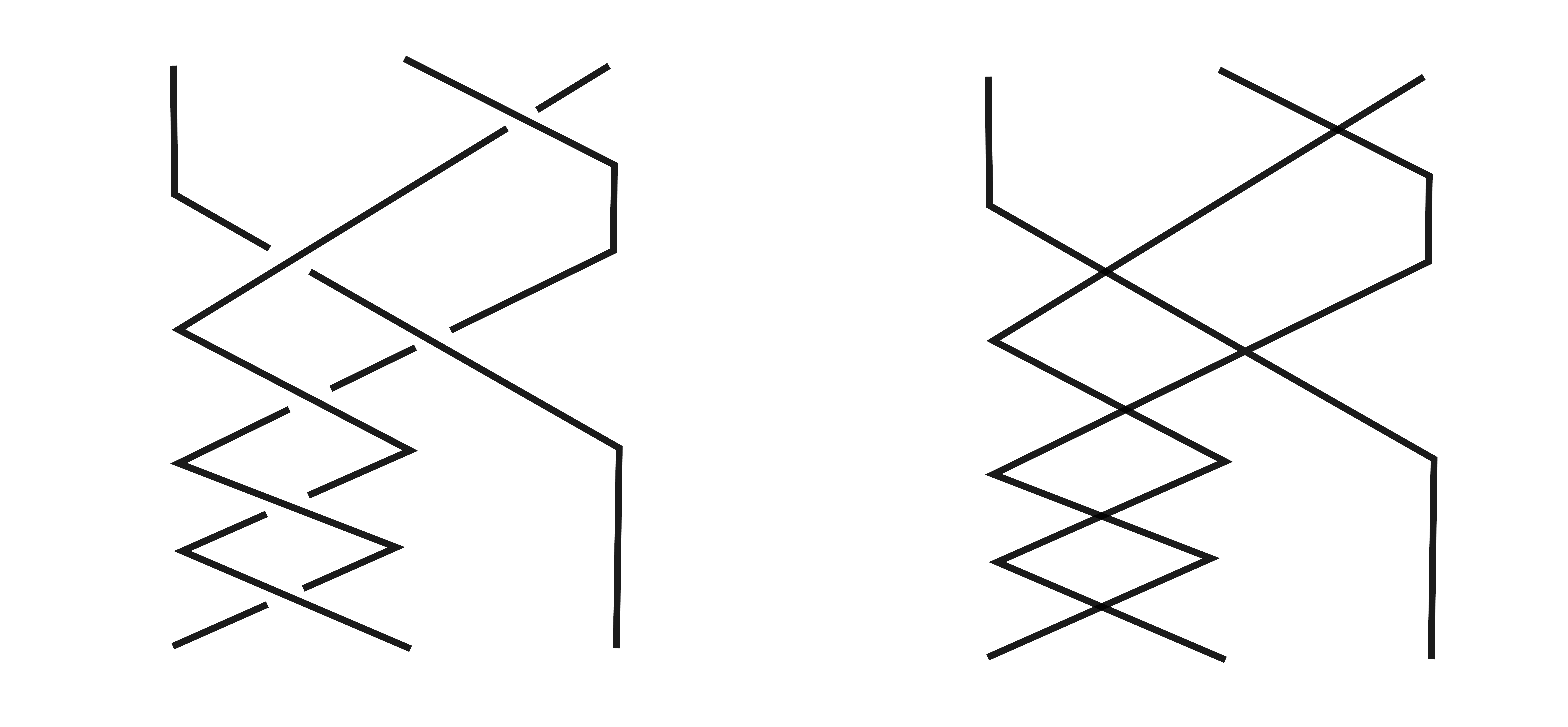}
\caption{a) A braid diagram. b) The corresponding singular braid \label{fig:diag}}
\end{figure}

For some given geometric braid its image under the projection map $(u,t)\mapsto(\text{Re}(u),t)$ into $\mathbb{R}\times[0,2\pi]$ might not be a braid diagram. We call a collection of curves $(u_j(t),t)$, $j=1,2,\ldots,s$, in $\mathbb{C}\times[0,2\pi]$ a \textit{generalized singular braid} if for every $j=1,2,\ldots,s$, there is a unique $i\in\{1,2\ldots,s\}$ with $u_j(0)=u_i(2\pi)$ and all intersection points between the different strands $(u_j(t),t)$ are isolated points. Hence the intersection points are allowed to be tangential and to involve more than two strands. The condition that the intersections are isolated is always satisfied if the strands are not identical and parametrized by real-analytic functions.

We say that a singular crossing of a generalized singular braid is \textit{generic} if it is a transverse intersection between exactly two strands. Otherwise we call it \textit{non-generic}. Non-generic crossings thus consist of more than two strands or are a tangential intersection of at least two strands. We also say that the functions $u_j$ that parametrize a generalized singular braid are non-generic if it has non-generic singular crossings.

\section{An outline of the algorithm}\label{sec:algo}

In \cite{bodepoly} we describe an algorithm that finds for any given braid a semiholomorphic polynomial $f:\mathbb{C}^2\to\mathbb{C}$ whose vanishing set intersects the 3-sphere of unit radius transversely in the closure of the given braid. In \cite{bode:ralg} this construction is modified so that it produces a semiholomorphic polynomial $f$ with a weakly isolated singularity, whose link is the closure of the square $B^2$ of the given braid word $B$.

The first step in both of these constructions is to find (via trigonometric interpolation) a parametrization of the given braid $B$ (up to isotopy) in terms of trigonometric polynomials. Let $\mathcal{C}$ denote the set of connected components of the closure of $B$ and let $s_C$ denote the number of strands that make up the component $C\in\mathcal{C}$. The first step of the algorithm in \cite{bodepoly} finds for every $C\in\mathcal{C}$ a pair of trigonometric polynomials $F_C, G_C:[0,2\pi]\to\mathbb{R}$ such that $B$ is parametrized by
\begin{equation}
\bigcup_{t\in[0,2\pi]}\bigcup_{C\in\mathcal{C}}\bigcup_{j=1}^{s_C}\left(F_C\left(\frac{t+2\pi j}{s_C}\right)+\rmi G_C\left(\frac{t+2\pi j}{s_C}\right),t\right)\subset\mathbb{C}\times[0,2\pi].
\end{equation}

Note that since we use the projection $(u,t)\mapsto(\text{Re}(u),t)$ to obtain braid diagrams and braid words, the real part of the parametrized strands $F_C\left(\tfrac{t+2\pi j}{s_C}\right)$ determines the crossing pattern (i.e., the braid word without the signs of the crossings) and the imaginary part $G_C\left(\frac{t+2\pi j}{s_C}\right)$ determines the signs of the crossings.

In particular, the first step of the algorithm in \cite{bodepoly} yields via trigonometric interpolation a set of trigonometric polynomials $F_C$ such that the corresponding curves $\left(F_C\left(\frac{t+2\pi j}{s_C}\right),t\right)$ parametrize a generalized singular braid.

%$B_{sing}=\prod_{j=1}^{\ell}\tau_{i_j}$ such that for every $j\in\{1,2,\ldots,\ell\}$ there is a choice of sign $\varepsilon_j\in\{\pm 1\}$ such that $B'=\prod_{j=1}^{\ell}\sigma_{i_j}^{\varepsilon_j}$ is isotopic to $B$. 

As in \cite{bodepoly} we would like to point out that the braid diagram for the braid parametrized by the $F_C$s and the $G_C$s is not necessarily identical to the braid diagram of $B$ that we started with. However, the braids are guaranteed to be braid isotopic. The functions $G_C$ are also found via trigonometric interpolation.

Once we have found a parametrization of a braid that is isotopic to $B$ in terms of $F_C$ and $G_C$, we define $g:\mathbb{C}\times S^1$ via 
\begin{equation}
g(u,\rme^{\rmi t})=\prod_{C\in\mathcal{C}}\prod_{j=1}^{s_C}\left(u-F_C\left(\frac{2t+2\pi j}{s_C}\right)-\rmi G_C\left(\frac{2t+2\pi j}{s_C}\right)\right).
\end{equation} 

Note the factor 2 in front of the variable $t$ in the expression above. It means that as $t$ varies between 0 and $2\pi$ we are traversing the braid $B$ twice. In other words, the vanishing set of $g$ is (up to isotopy) the closed braid $B^2$.

Expanding the product above results in a polynomial expression for $g$ with respect to the complex variable $u$, as well as with respect to $\rme^{\rmi 2t}$ and $\rme^{-\rmi 2t}$. We define $p_k:\mathbb{C}^2\to\mathbb{C}$ by
\begin{equation}
p_k(u,r\rme^{\rmi t})=r^{2ks}g\left(\frac{u}{r^{2k}},\rme^{\rmi t}\right),
\end{equation}
where $k$ is a sufficiently large integer. Note that by writing $v=r\rme^{\rmi t}$ this becomes a mixed polynomial $p_k(u,v,\bar{v})=(v\bar{v})^{ks}g\left(\frac{u}{(v\bar{v})^k},\frac{v}{\sqrt{v\bar{v}}}\right)$ if $2ks$ is greater than the degree of $g$ with respect to $\rme^{\rmi t}$ and $\rme^{-\rmi t}$. Note that exponents of $\rme^{\rmi t}$ and $\rme^{-\rmi t}$ in $g$ are even, so that the term $\sqrt{v\bar{v}}$ always comes with an even exponent.

The constructed polynomials $p_k$ are semiholomorphic and radially weighted homogeneous with respect to $P=(2k,1)$ with degree $d(P;f)=2ks$.

Since all roots of $g(\cdot,\rme^{\rmi t})$ are simple, the singularity at the origin is weakly isolated. The link of the singularity is the closure of $B^2$. An explicit isotopy between $p_k^{-1}(0)\cap S_{\rho}^3$ and a projection of $p_{k}^{-1}(0)\cap(\mathbb{C}\times \rho S^1)$ to $S_{\rho}^3$, which is known to be the closure of $B^2$, can be constructed as in \cite{bodepoly}.

\begin{figure}[H]
\labellist
\large
\pinlabel a) at 100 1100
\pinlabel b) at 1000 1100
\pinlabel c) at 2000 1100
\pinlabel d) at 400 -100
\pinlabel e) at 1800 -100
\endlabellist
\includegraphics[height=4.3cm]{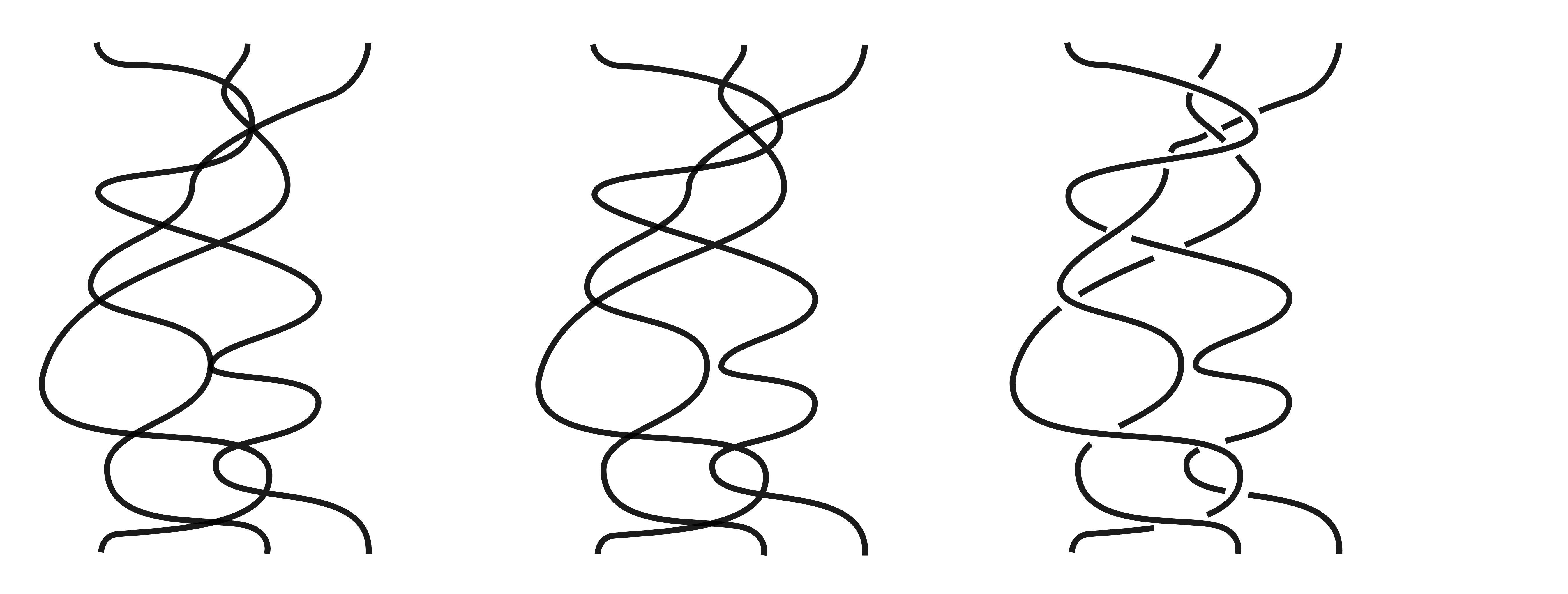}
\vspace{0.5cm}
\includegraphics[height=8.6cm]{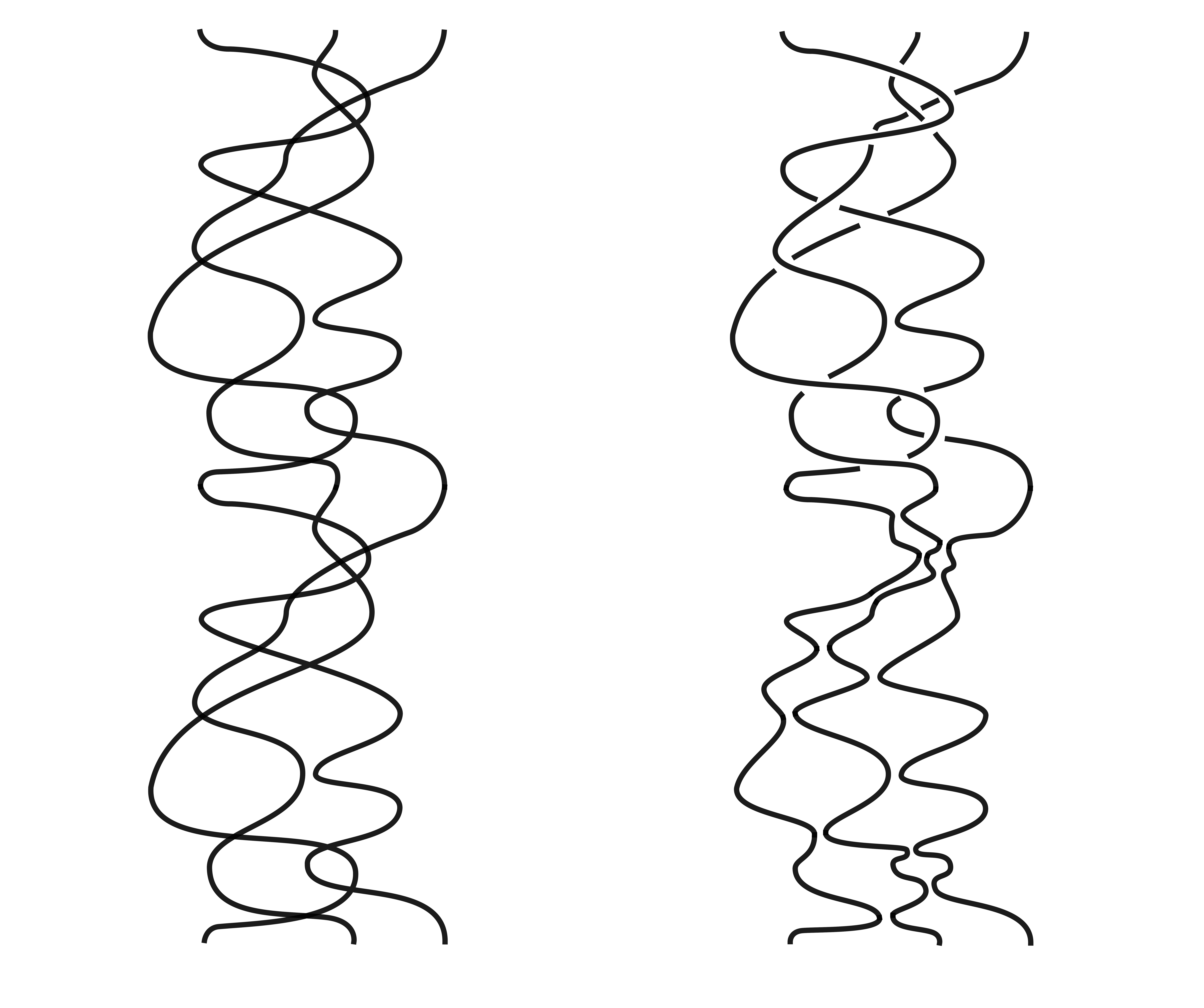}
\caption{a) The generalized singular braid parametrized by the $F_C$s is not necessarily a singular braid. b) We can make the $F_C$s generic. The resulting functions $\tilde{F}_C$ parametrize a singular braid $B_{sing}$. c) The braid diagram of a braid $B'$ that is obtained from an appropriate choice of crossing signs for the singular crossings in $B_{sing}$. The braid $B'$ is braid isotopic to $B$ in Figure \ref{fig:diag}a). d) $B_{sing}^2$, the vanishing set of $g$. e) A resolution of the singular crossings of $B_{sing}^2$ that results in $B'$, whose closure is the link of the singularity of $f$. \label{fig:braids}}
\end{figure}

Algorithm 1 below, which constructs a weakly isolated singularity for any given link, is based on the same ideas. However, it uses parametrizations of singular braids instead of classical braids. 
Figure \ref{fig:braids} shows the parametrized braids and vanishing sets of functions at various steps throughout the algorithm.

We start with a braid diagram of a braid $B$ that closes to the link that we want to construct, such as shown in Figure \ref{fig:diag}a). Via the same trigonometric interpolation procedure as in \cite{bodepoly} we obtain trigonometric polynomials $F_C$ that parametrize curves that form a generalized singular braid, see Figure \ref{fig:braids}a). The functions $F_C$ are not necessarily generic. Via small modifications we can make the $F_C$s generic and obtain a singular braid $B_{sing}$ (Figure \ref{fig:braids}b)) that has the property that there exists a choice of signs for each of its singular crossings that turns $B_{sing}$ into a classical braid that is isotopic to $B$, see Figure \ref{fig:braids}c). 

As in \cite{bodepoly} we define a function $p_k$. It is a radially weighted homogeneous mixed polynomial with a singularity at the origin. The intersection $p_k^{-1}(0)\cap S_{\rho}^3$ is the singular braid $B_{sing}^2$ for all $\rho>0$, shown in Figure \ref{fig:braids}d). In particular, it does not have a weakly isolated singularity, since the singular crossings correspond to lines of critical points of $p_k$ through the origin.
However, we can add a term $r^m A(\rme^{\rmi t})$, where $v=r\rme^{\rmi t}$ and $A$ is a finite Fourier series, that makes the singularity weakly isolated. This term has to be constructed in such a way that all singular crossings of $B_{sing}^2$ are resolved in such a way that the link of the singularity is the closure of a braid isotopic to $B$, which is displayed in Figure \ref{fig:braids}e).

\begin{algorithm}[H]
\caption{Construction of weakly isolated singularities}
\begin{steps}
\item From the given braid word $B$ find the trigonometric polynomials $F_C$ via trigonometric interpolation as in \cite{bodepoly}.
\item Make $F_C$ generic. Call the resulting functions $\tilde{F}_C$.
\item Define $g(u,\rme^{\rmi t})=\underset{C\in\mathcal{C}}{\prod}\underset{j=1}{\overset{s_C}{\prod}}\left(u-\tilde{F}_C\left(\tfrac{2t+2\pi j}{s_C}\right)\right)$.
\item Define $p_k(u,r\rme^{\rmi t})=r^{2ks}g\left(\tfrac{u}{r^{2k}},\rme^{\rmi t}\right)$ with $2ks$ greater than the degree of $g$ with respect to $\rme^{\rmi t}$ and $\rme^{-\rmi t}$.
\item Solve the trigonometric interpolation problem $(*)$ in Section \ref{sec:A} for $A:S^1\to\mathbb{C}$.
\item Define $f(u,r\rme^{\rmi t})=p_k(u,r\rme^{\rmi t})+r^mA(\rme^{\rmi t})$, where $m$ is odd and larger than the degree of $A$ with respect to $\rme^{\rmi t}$ and $\rme^{-\rmi t}$ and larger than $2ks$.
\end{steps}
\end{algorithm}

The idea behind Algorithm 1 can be understood as a natural consequence of \cite{eder}, where we introduce certain non-degeneracy conditions of mixed functions and study links of their (weakly) isolated singularities. We show that for such non-degenerate mixed polynomials adding terms above the boundary of the Newton polygon does not change the topology of the link. This seems to suggest that not all links can be obtained as link of weakly isolated singularities of non-degenerate mixed polynomials (and it is an interesting question for which links this is possible). Algorithm 1 thus constructs a degenerate polynomial $p_k$ and adds an appropriate term above the Newton boundary.

In the following sections we explain the individual steps. In particular, we show that each of the steps can be performed algorithmically. Then we show that the algorithm indeed constructs weakly isolated singularities with the closure of $B$ as the link of the singularity.

\section{The individual steps}\label{sec:steps}

Step 1 is identical to the corresponding procedure in \cite{bodepoly}. Note however that the resulting trigonometric polynomials $F_C$ are not necessarily generic. This is not a problem for the construction in \cite{bodepoly}. For the construction in Algorithm 1 however, we need generic parametrizations. This is done in Step 2, which requires a more detailed explanation, detailed in Section \ref{sec:generic}. Step 3 and 4 are then simply definitions of functions. Step 5 is arguably the most important part of this algorithm. It will be discussed in detail in Section \ref{sec:A}. Step 6 is again simply the definition of a function $f$. Thus if Step 2 and Step 5 can be performed algorithmically, Algorithm 1 is indeed an algorithm.

\subsection{Generic parametrizations of singular braids (Step 2)}\label{sec:generic}

The set of trigonometric polynomials $F_C$ that result in generic parametrizations is dense in the set of trigonometric polynomials. So we should expect that the trigonometric polynomials $F_C$ found via the method from \cite{bodepoly} almost always have this property. However, there is no guarantee. If the $F_C$s are not generic, we have to make some adjustments to make them generic. Again we would like to emphasize that in practice, this is usually not necessary. 

Alternative to the method from \cite{bodepoly} trigonometric approximation can be used in Step 1 to find a trigonometric parametrisation of the braid. If the approximated original braid parametrisation is generic, i.e., the corresponding projection gives a braid diagram, then a sufficiently close approximation is generic, too. Therefore, Step 2 of the algorithm is not needed if trigonometric approximation is used in Step 1. However, in contrast to the method from \cite{bodepoly}, trigonometric approximation does not allow us to give bounds on the degrees of the trigonometric polynomials that parametrize the strands.

To a given set of trigonometric polynomials $F_C$, $C\in\mathcal{C}$, with given values of $s_C$, and for any closed interval $[a,b]$ in $[0,2\pi]$ (with $a$ and $b$ away from the values of $t$ for which there are intersections between the $F_C\left(\tfrac{t+2\pi j}{s_C}\right)$s) we call the permutation of the $s=\underset{C\in\mathcal{C}}{\sum}s_C$ curves parametrized by
\begin{equation}
\bigcup_{t\in[a,b]}\bigcup_{C\in\mathcal{C}}\bigcup_{j=1}^{s_C}\left(F_C\left(\frac{t+2\pi j}{s_C}\right),t\right)
\end{equation}
the \textit{permutation associated to the $F_C$s in the interval $[a,b]$}. It is thus an element of the symmetric group on $s$ elements. Note that this is possible, because the $F_C$s are real analytic. This is why even at tangential intersections of strands, we can uniquely determine which incoming arc corresponds to which outgoing arc.

\begin{lemma}[\cite{bodepoly}]\label{lem:perm}
Let $F_C$, $C\in\mathcal{C}$ be trigonometric polynomials and let $B=\underset{j=1}{\overset{\ell}{\prod}}\sigma_{i_j}^{\varepsilon_j}$ be a braid, whose closure has $|\mathcal{C}|$ components. Then there exist trigonometric polynomials $G_C$, $C\in\mathcal{C}$, such that
\begin{equation}
\bigcup_{t\in[0,2\pi]}\bigcup_{C\in\mathcal{C}}\bigcup_{j=1}^{s_C}\left(F_C\left(\frac{t+2\pi j}{s_C}\right)+\rmi G_C\left(\frac{t+2\pi j}{s_C}\right),t\right)
\end{equation}
parametrizes a braid that is braid isotopic to $B$ if there exist values $t_j\in[0,2\pi]$, $j=1,2,\ldots,\ell+1$, $t_1=0$, $t_{\ell+1}=2\pi$, $t_j<t_{j+1}$ such that the permutation associated to the $F_C$s in the interval is $[t_j,t_{j+1}]$ is the transposition $(i_j\leftrightarrow i_j+1)$. 
\end{lemma}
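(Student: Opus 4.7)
The plan is to realize the $G_C$'s as solutions of a trigonometric interpolation problem that enforces the correct over/under ordering at every real-part crossing. First, I would use the hypothesis on the permutations to locate and label the crossings: on each interval $[t_j,t_{j+1}]$ the net permutation of the $F_C$-curves is the transposition $(i_j\leftrightarrow i_j+1)$, so, generically, the two strands that swap are exactly those occupying positions $i_j$ and $i_j+1$ at time $t_j$. I would enumerate the real-part equality times in this interval as $\tau_j^{(1)}<\cdots<\tau_j^{(r_j)}$, noting that $r_j$ is odd since the transposition is odd. For each $\tau_j^{(r)}$ the two strands involved correspond to two parameter points $(\tau_j^{(r)}+2\pi a)/s_C$ and $(\tau_j^{(r)}+2\pi a')/s_{C'}$, one for each of the relevant $G_C$'s.

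Second, I would assign target signs $\eta_j^{(r)}\in\{\pm1\}$ to the crossings so that $\sum_r \eta_j^{(r)}=\varepsilon_j$ (possible since $r_j$ is odd); because the strands involved are always the same pair, the subword produced inside $[t_j,t_{j+1}]$ reads $\sigma_{i_j}^{\eta_j^{(1)}}\cdots\sigma_{i_j}^{\eta_j^{(r_j)}}=\sigma_{i_j}^{\varepsilon_j}$ in the braid group. Using the over/under convention of Section \ref{sec2}, each $\eta_j^{(r)}$ translates into a strict inequality $G_C(\cdot)<G_{C'}(\cdot)$ (or the reverse) at the two prescribed parameter points. The whole data is a finite list of strict linear inequalities between prescribed values of $G_C$'s at pairwise distinct points on $S^1$, distinctness following from the distinctness of the $\tau_j^{(r)}$ and from the fact that the shifts $2\pi a/s_C$ with $a=1,\dots,s_C$ separate different strands of one component.

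Third, I would pick any real numbers consistent with these inequalities and solve the interpolation by standard Lagrange-type trigonometric interpolation (for instance with a Dirichlet kernel), producing trigonometric polynomials $G_C$ attaining these values at the required points; a sufficiently small perturbation preserves the strict inequalities. With these $G_C$'s the curves $(F_C+\rmi G_C)$ are pairwise disjoint away from the $\tau_j^{(r)}$, and by construction the crossing at $\tau_j^{(r)}$ has sign $\eta_j^{(r)}$, so the braid word read off interval by interval is $\prod_j \prod_r \sigma_{i_j}^{\eta_j^{(r)}}$, which reduces in the braid group to $B$, proving braid isotopy.

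The main obstacle I anticipate is the bookkeeping when the singular braid parametrized by the $F_C$'s is non-generic, or when additional real-part crossings occur inside $[t_j,t_{j+1}]$ beyond those between strands $i_j$ and $i_j+1$: there one must argue that signs can still be chosen so that the spurious portion of the subword represents the identity in the braid group, for instance by pairing adjacent cancelling crossings of the same pair or using the commutation relations for disjoint pairs. Verifying this in full generality is the delicate part; it is, however, a finite combinatorial check once the interpolation machinery of the previous paragraph is in place.
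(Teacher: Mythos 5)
The paper does not give its own proof of this lemma; it is quoted verbatim from \cite{bodepoly} and used as a black box. Your proposal, which reduces the problem to interpolation conditions on the imaginary parts that enforce over/under orderings at each real-part crossing, is certainly in the right spirit of what such a proof must do. Nevertheless, it has two genuine gaps, the first of which you acknowledge but substantially underestimate.

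First, your argument in the clean situation where all real-part crossings in $[t_j,t_{j+1}]$ occur between positions $i_j$ and $i_j+1$ is fine: there the count $r_j$ is odd, the subword is a power of $\sigma_{i_j}$, and $\sum_r\eta_j^{(r)}=\varepsilon_j$ is achievable. But when spurious crossings occur, you assert that one can choose signs so that the extra crossings contribute the identity, and you describe this as a ``finite combinatorial check'' handled by pairing adjacent cancelling crossings or by commutation relations. Neither mechanism suffices in general: the spurious crossings between a given pair of positions need not be adjacent in the word (consider a subword of the shape $\tau_k\tau_{i_j}\tau_k$ with $|k-i_j|=1$, where the two $\tau_k$'s cannot be brought together without invoking the Artin relation, which does not respect a naive sign-cancellation strategy), and the commutation relations only apply to disjoint pairs. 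The true content here is the nontrivial claim that any word in the singular braid monoid whose image in the symmetric group is the adjacent transposition $(i_j,i_j+1)$ admits a resolution equal in the braid group to $\sigma_{i_j}^{\varepsilon_j}$; this requires its own argument (or, as in \cite{bodepoly}, one arranges the interpolation so that spurious crossings simply do not occur).

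Second, the lemma as stated does not assume the parametrization is generic. Indeed, the whole point of Step 2 of Algorithm 1 is to make the $F_C$'s generic while \emph{preserving} the hypotheses of this lemma, which only makes sense if the lemma also applies to non-generic $F_C$'s. Your proof tacitly assumes genericity (``generically, the two strands that swap \ldots''), so tangential intersections and singular crossings involving more than two strands are not addressed; the permutation associated to an interval is still well-defined in those cases by real-analyticity (as the paper notes), but the reduction to a list of strict inequalities between $G_C$-values at finitely many transverse double points no longer applies without modification.
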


The algorithm in \cite{bodepoly} finds trigonometric polynomials $F_C$ such that the condition in Lemma \ref{lem:perm} is satisfied. We would like to make the $F_C$s generic, while maintaining this property.

The $F_C$s being non-generic could mean that there are tangential intersections between strands of the corresponding generalized singular braid $B_{sing}$ or that there are more than two strands involved in a singular crossing of $B_{sing}$.

\begin{figure}
\centering
\labellist
\large
\pinlabel a) at 100 1600
\pinlabel b) at 1900 1600
\pinlabel c) at 100 800
\pinlabel d) at 1900 800
\endlabellist
\includegraphics[height=6cm]{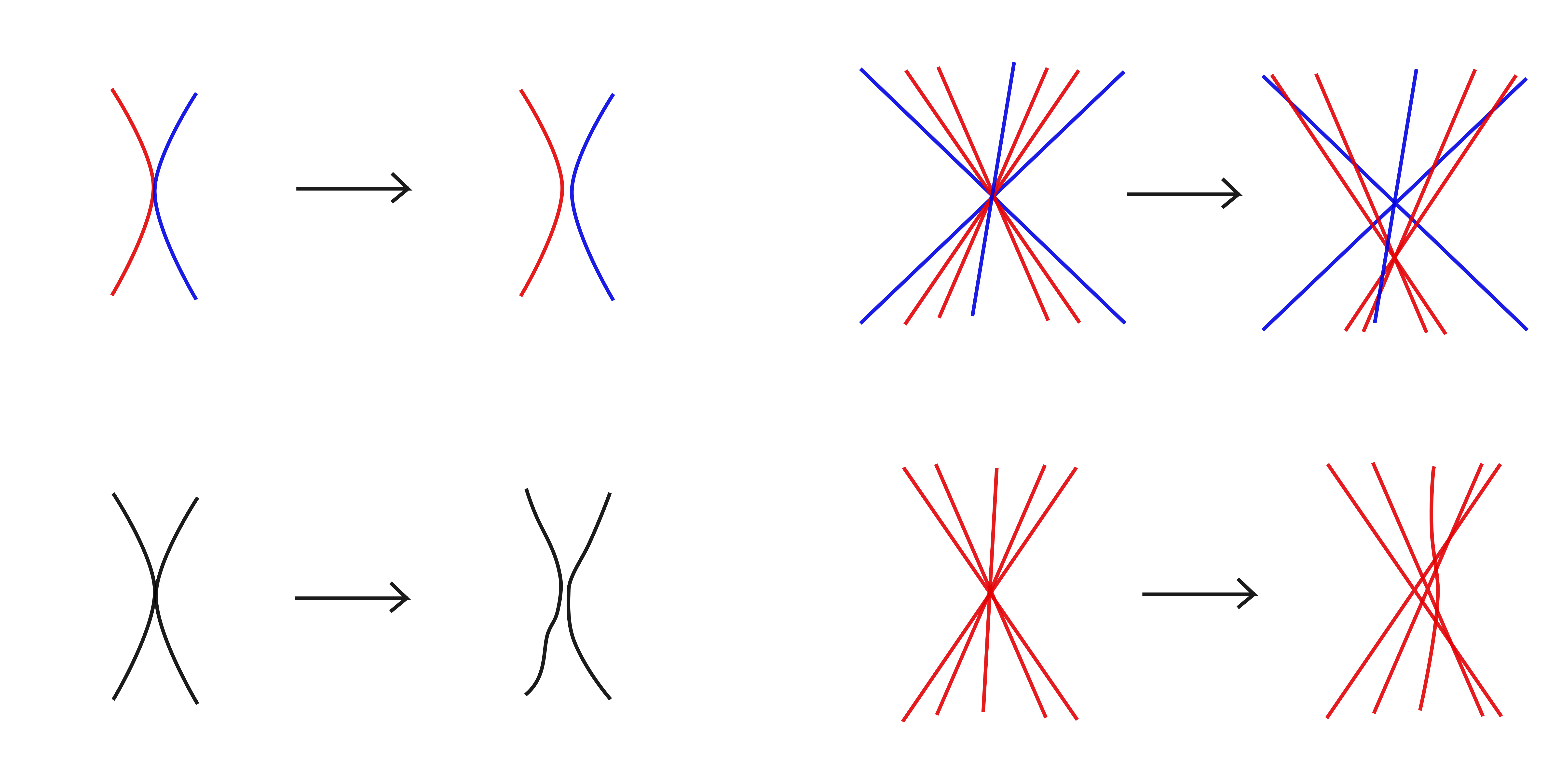}
\caption{The elimination of non-generic crossings. a) A tangential intersection between strands from different components gets eliminated. b) An intersection between more than 2 strands from different components gets eliminated. c) A tangential intersection between strands from the same component gets eliminated. d) An intersection between more than two strands, all of which are from the same component, is eliminated. \label{fig:elim}}
\end{figure}

Having explicit expressions for the functions $F_C$ we can find all values $t=t_{k}'$, $k=1,2,\ldots,M$, for which there are non-generic crossings. The fact that there are only finitely many of these follows from the real analyticity of the functions. It will simplify our notation if we adopt the convention of updating our variables throughout the modifications outlined below. That is to say, when we change the function $F_C$ for example by adding a term, the resulting function will again be called $F_C$. The values $t_k'$, $k=1,2,\ldots,M$, are again defined as the values of $t$ at which the new collection of functions $F_C$ has non-generic crossings. Note that their number $M$ can change throughout our modification, and will eventually be 0.

A tangential intersection between the strands $(C,j)$ and $(C',j')$ at $t=t_k'$ corresponds to a non-simple root of $F_{C}\left(\tfrac{t+2\pi j}{s_C}\right)-F_{C'}\left(\tfrac{t+2\pi j'}{s_{C'}}\right)$ at $t=t_k'$ and a singular crossing between more than two strands $(C_i,j_i)$, $i=1,2,\ldots,m'$, at $t=t_k$ corresponds to a common root of $F_{C_i}\left(\tfrac{t+2\pi j_i}{s_{C_i}}\right)-F_{C_{i'}}\left(\tfrac{t+2\pi j_{i'}}{s_{C_{i'}}}\right)$ at $t=t_k'$. We can thus check numerically if any given collection of trigonometric parametrizations $F_C$ is generic or not. Likewise, we can check numerically if the $F_C$s satisfy the condition from Lemma \ref{lem:perm} for the same values $t_j$, $j=1,2,\ldots,\ell+1$, as the original functions $F_C$.

We can remove the tangential intersection points between two strands of different components $C$ and $C'$ by adding small constants $\varepsilon_{C,1}$ to each $F_C$. This is displayed in Figure \ref{fig:elim}a). This requires $\varepsilon_{C,1}\neq\varepsilon_{C',1}$ if $C\neq C'$. Furthermore, we can choose $\varepsilon_{C,1}$ sufficiently small so that the resulting trigonometric polynomials $F_C$ still satisfy the property from Lemma \ref{lem:perm} for the same values $t_j$, $j=1,2,\ldots,\ell+1$. Furthermore, the addition of $\varepsilon_{C,1}$ should not introduce any new non-generic crossings. This can be achieved by choosing the values for the different $\varepsilon_{C,1}$s successively, i.e., for an arbitrary ordering of the components $C_1,C_2,\ldots,C_{|C|}$ we first choose $\varepsilon_{C_1,1}$ such that it removes tangential intersection points involving strands from $C_1$ without introducing new ones, then we choose $\varepsilon_{C_2,1}$ and so on. We also add a small constant to every $F_C$ that is constant. Such components only consist of a single vertical strand. Adding a small constant guarantees that none of them are involved in any non-generic crossings.

Note that sufficient values for $\varepsilon_{C,1}$ can be found explicitly knowing the values of $t$ for which we have generic or non-generic crossings as well as maxima and minima of the functions $F_{C}\left(\tfrac{t+2\pi j}{s_C}\right)-F_{C'}\left(\tfrac{t+2\pi j'}{s_{C'}}\right)$, $C,C'\in\mathcal{C}$, $j\in\{1,2,\ldots,s_C\}$, $j'\in\{1,2,\ldots,s_{C'}\}$. Alternatively, since we can check numerically if the $F_C$s are generic or not, we can take $\varepsilon_{C,1}$ to be an element of a non-zero sequence converging to $0$ and if the resulting $F_C$ is non-generic, we redefine $\varepsilon_{C,1}$ to be the next element in that sequence.

By taking $F_C(t+\varepsilon_{C,2})$ instead of $F_C(t)$ as the trigonometric polynomial for the component $C$ with appropriately chosen small $\varepsilon_{C,2}$, we obtain a parametrization where every singular crossing that involves more than two strands only involves strands from the same component. This is achieved by choosing $\varepsilon_{C,2}\neq\varepsilon_{C',2}$ if $C\neq C'$ and each $\varepsilon_{C,2}$ sufficiently small. We do not introduce any new non-generic crossing in doing this, since the intersection is transverse, the curves are real analytic and the intersection does not involve any constant strands. The effect is shown in \ref{fig:elim}b). How small we have to choose each $\varepsilon_{C,2}$ can be determined from the values of $t$ for which there are crossings. Note in particular that the $\varepsilon_{C,2}$s can be chosen such that the condition from Lemma \ref{lem:perm} is still satisfied for the same values $t_j$, $j=1,2,\ldots,\ell+1$, as before. Note that we find the values of $\varepsilon_{C,2}$ successively. We choose a value for one component $C$ and only then decide on the value for the next component $C'$ and so on.

Thus the only remaining non-generic crossings are between strands of the same component. Suppose we have a tangential intersection between $(C,j)$ and $(C,j')$ at $t=t_k'$. Then we add $\varepsilon\cos\left(t-\frac{t_k'+2\pi j}{s_C}\right)$ to $F_C$, where as usual $\varepsilon$ is small and its sign is determined by the sign of $F_{C}\left(\tfrac{t+2\pi j}{s_C}\right)-F_{C}\left(\tfrac{t+2\pi j'}{s_{C}}\right)$ in a neighbourhood of $t=t_k'$. 

%By choosing $\varepsilon$ small enough we guarantee that the welcome crossings still form the desired crossing pattern and that we do not introduce any new unwelcome or non-generic crossings. 

By adding $\varepsilon\cos\left(t-\frac{t_k+2\pi j}{s_C}\right)$ we know that $F_{C}\left(\tfrac{t+2\pi j}{s_C}\right)-F_{C}\left(\tfrac{t+2\pi j'}{s_{C}}\right)$ is non-zero in a neighbourhood of $t=t_k'$ (independent of $\varepsilon$) for all $\varepsilon$ of the correct sign and sufficiently small modulus. We have thus reduced the number of tangential intersections by one, see Figure \ref{fig:elim}c). Proceeding like this we eliminate all tangential intersections between strands. This includes tangential intersections that are part of non-generic crossings with more than two strands. Again we can choose $\varepsilon$ sufficiently small, so that Lemma \ref{lem:perm} is still satisfied for $t_j$, $j=1,2,\ldots,\ell+1$.

Thus the only remaining non-generic crossings are crossings that involve more than two strands and all of them are from the same component. Suppose we have such a crossing at $t=t_k'$ and two of the strands involved in that crossing are $(C,j)$ and $(C,j')$. Then we add $\varepsilon'\cos\left(t-\varphi\right)$, where $\varphi=t_k'/s_C-\pi+\pi(j'-j)/s_C$. This value is chosen such that $\cos\left(\tfrac{t_k'+2\pi j}{s_C}-\varphi\right)=\cos\left(\tfrac{t_k'+2\pi j'}{s_C}-\varphi\right)$, while $\cos\left(\tfrac{t_k'+2\pi j}{s_C}-\varphi\right)\neq\cos\left(\tfrac{t_k'+2\pi j''}{s_C}-\varphi\right)$ for all $j''\notin\{j,j'\}$. Thus after adding $\varepsilon'\cos\left(t-\varphi\right)$ we still have a crossing at $t=t_k'$ between $(C,j)$ and $(C,j')$, but no other strand is involved in that crossing. Hence it is a generic crossing. The other strands that used to be part of that crossing have been moved aside and could form other non-generic crossings with more than two strands. Therefore, we have not necessarily reduced the number of non-generic crossings in this step. However, we have reduced the sum of the number of strands involved in a non-generic crossing $c$, with the sum going over all non-generic crossings $c$. Thus repeating this step we can eliminate all non-generic crossings and obtain a generic parametrization $F_C$. This elimination process is illustrated in Figure \ref{fig:elim}d).

It is more difficult to give an explicit formula for a sufficient value of $\varepsilon'$. Since we are not particularly concerned with achieving an optimal run-time for our algorithm, we may again resort to the approach of using a non-zero sequence converging to $0$ and checking at each value of $\varepsilon'$ if the resulting parametrisation is generic and satisfies Lemma \ref{lem:perm}.

Let now $\tilde{F}_C$, $C\in\mathcal{C}$ denote the generic trigonometric polynomials that we obtain from this procedure and let $B_{sing}=\underset{j=1}{\overset{\ell'}{\prod}}\tau_{i_j}$ be the singular braid that is parametrized by the $\tilde{F}_C$s. 
\begin{lemma}\label{lem:signs}
There is a choice of signs $\varepsilon_j\in\{\pm 1\}$ such that the input braid $B$ is braid isotopic to $\underset{j=1}{\overset{\ell'}{\prod}}\sigma_{i_j}^{\varepsilon_j}$.
\end{lemma}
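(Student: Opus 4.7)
The plan is to apply Lemma \ref{lem:perm} directly to the generic parametrization $\tilde{F}_C$ with the \emph{original} input braid $B = \prod_{j=1}^{\ell} \sigma_{i_j}^{\varepsilon_j}$, and then to read off the signs $\varepsilon_j$ from the $G_C$s the lemma produces.

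First I would verify that the hypothesis of Lemma \ref{lem:perm} still holds for the $\tilde{F}_C$s with respect to the same partition values $t_1 < t_2 < \cdots < t_{\ell+1}$ that worked for the original $F_C$s from \cite{bodepoly}. This is precisely what the modifications in Section \ref{sec:generic} are designed to preserve: every perturbation introduced there (the constants $\varepsilon_{C,1}$, the phase shifts $\varepsilon_{C,2}$, and the cosine perturbations of modulus $\varepsilon$ and $\varepsilon'$) is chosen small enough that the permutation associated to the strands over each subinterval $[t_j, t_{j+1}]$ remains the adjacent transposition $(i_j \leftrightarrow i_j+1)$. Inside such a subinterval $B_{sing}$ may contain several generic crossings in place of a single (possibly non-generic) one, but the product of their underlying transpositions must still equal $(i_j \leftrightarrow i_j+1)$. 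I would then invoke Lemma \ref{lem:perm} to obtain trigonometric polynomials $G_C$ such that the parametrization by $\tilde{F}_C + \rmi G_C$ defines a classical braid braid-isotopic to $B$.

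With this classical braid in hand, I would extract the desired signs. Since the $\tilde{F}_C$s are generic, each singular crossing of $B_{sing}$ is a transverse meeting of exactly two strands; since adding the $G_C$s yields a classical (non-singular) braid, those two strands must have distinct $\text{Im}(u)$-values at the crossing. The convention of Section \ref{sec2} therefore assigns a well-defined sign $\varepsilon_j \in \{\pm 1\}$ to each $\tau_{i_j}$. Projecting by $(u,t) \mapsto (\text{Re}(u),t)$ recovers $B_{sing} = \prod_{j=1}^{\ell'} \tau_{i_j}$, so the classical braid is represented by $\prod_{j=1}^{\ell'} \sigma_{i_j}^{\varepsilon_j}$, and by construction it is braid-isotopic to $B$.

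The main obstacle is the opening bookkeeping step, because in general $\ell' \neq \ell$: within one original subinterval $[t_j, t_{j+1}]$ the singular braid $B_{sing}$ may contain several generic crossings, and one needs their net permutation to still be the single adjacent transposition $(i_j \leftrightarrow i_j+1)$. The ``sufficiently small'' clauses throughout Step 2 — no perturbation is allowed to move a crossing across a partition point $t_j$, and each local perturbation replaces a non-generic crossing by a collection of transverse crossings with the same overall transposition — are exactly what is needed to guarantee this, so the application of Lemma \ref{lem:perm} to the $\tilde{F}_C$s is justified and the lemma follows.
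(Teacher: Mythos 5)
Your proof is correct and takes essentially the same route as the paper: verify that the Step~2 perturbations preserve the hypothesis of Lemma~\ref{lem:perm} with the same partition $t_1 < \cdots < t_{\ell+1}$, invoke Lemma~\ref{lem:perm} to obtain the $G_C$s and a classical braid $B'$ isotopic to $B$, observe that $B_{sing}$ is the projection of $B'$ (crossings depend only on $\tilde{F}_C$, not on $G_C$), and read off the $\varepsilon_j$ as the crossing signs of $B'$. Your extra discussion of the $\ell' \neq \ell$ bookkeeping is a correct elaboration of a point the paper's proof leaves implicit.
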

\begin{proof}
We have selected the values of the different $\varepsilon_{C,1}$s, $\varepsilon_{C,2}$s, $\varepsilon$s and $\varepsilon'$s such that the $\tilde{F}_C$s still satisfy the condition from Lemma \ref{lem:perm} for the same values $t_j$, $j=1,2,\ldots,\ell+1$ as the original $F_C$s. Therefore by Lemma \ref{lem:perm}, there exist trigonometric polynomials $\tilde{G}_C$, $C\in\mathcal{C}$, such that $\tilde{F}_C+\rmi \tilde{G}_C$ parametrizes a braid $B'$ that is braid isotopic to $B$. Since the $\tilde{F}_C$s are generic, this is equivalent to $B_{sing}$ being obtained from a braid diagram of $B'$ by forgetting information about signs of crossings, with $B'$ a braid that is isotopic to $B$. Thus the value of $\varepsilon_j$ is the sign of the corresponding crossing in the braid $B'$.
\end{proof}

Note that throughout Step 2 we only add terms of degree 0 or 1 with respect to $\rme^{\rmi t}$ and $\rme^{-\rmi t}$ and the degree 1 terms are only necessary for components with more than one strand. Therefore, the degrees of the trigonometric polynomials $F_C$ are not affected by the procedure above and the degree of $\tilde{F}_C$ is equal to the degree of $F_C$.

\subsection{Trigonometric interpolation (Step 5)}\label{sec:A}

For a given generic collection of trigonometric polynomials $\tilde{F}_C$ the roots of 
\begin{equation}\label{eq:defg}
g(u,\rme^{\rmi t})=\underset{C\in\mathcal{C}}{\prod}\underset{j=1}{\overset{s_C}{\prod}}\left(u-\tilde{F}_C\left(\tfrac{2t+2\pi j}{s_C}\right)\right)
\end{equation} 
form a singular braid $B_{sing}^2$ that is the square of the singular braid $B_{sing}$. Both of these singular braids have $s=\underset{C\in\mathcal{C}}{\sum}s_C$ strands.

Let $t_k$, $k=1,2,\ldots,\ell'$ denote the values of $t\in[0,2\pi]$ for which there are singular crossings in $B_{sing}$. By a shift of the variable $t$, we can always guarantee that $t_k\neq \pi$ for all $k$. %Note that because of the symmetry of $B_{sing}^2$, $M$ is even and $t_{k+m/2}=t_k+\pi$ for all $k\in\{1,2,\ldots,M/2\}$. 
Denote by $(C_1(k),j_1(k))$ and $(C_2(k),j_2(k))$ the two strands that form the crossing at $t=t_k$. Which of these strands carries which label is not important, but by convention we choose the labels such that $\tilde{F}_{C_1(k)}\left(\tfrac{t+2\pi j_1(k)}{s_{C_1(k)}}\right)<\tilde{F}_{C_2(k)}\left(\tfrac{t+2\pi j_2(k)}{s_{C_2(k)}}\right)$ for all $t\in(t_k-\varepsilon,t_k)$ for some small $\varepsilon>0$. 

Note that $g$ from Eq.~\eqref{eq:defg} has only real roots and is therefore a real polynomial. Since all roots of $g(\cdot,\rme^{\rmi t})$ are simple when $t\neq t_k$, $k=1,2,\ldots,\ell'$, there is a critical point of $g$ between each neighbouring pair of roots of $g$, i.e., if $u_1,u_2\in\mathbb{R}$ are roots of $g(\cdot,\rme^{\rmi t})$ and there is no root of $g(\cdot,\rme^{\rmi t})$ in the open interval $(u_1,u_2)$, there is a unique critical point $c\in(u_1,u_2)$. We call $\sign(g(c,\rme^{\rmi t}))$ the sign of the critical point $c$. As $t$ varies, the critical points of $g(\cdot,\rme^{\rmi t})$ move on the real line, but they remain distinct and maintain their sign for all $t\neq t_k$, $k=1,2,\ldots,\ell'$.

At $t=t_k$ two roots and their intermediate critical point $c$ collide. We say that $c$ is the critical point associated with the crossing.

Step 5 of Algorithm 1 is to solve the following trigonometric interpolation problem $(*)$:\\
The set of data points takes the form $(t_k,y_k,z_k)$, $k=1,2,\ldots,\ell'$, where $t_k$, $k=1,2,\ldots,\ell'$, are as above the values of $t$ for which there are crossings of $B_{sing}$. The value $y_k$ is such that $\tfrac{y_k}{\cos\left(\tfrac{t_k}{2}\right)}$ is a non-zero real number that has the same sign as the critical point associated with the crossing at $t=t_k$.

We know from Lemma \ref{lem:signs} that for every crossing of $B_{sing}=\underset{k=1}{\overset{\ell'}{\prod}}\tau_{j_k}$ there is a choice of sign $\varepsilon_k\in\{\pm 1\}$ such that $B'=\prod_{k=1}^{\ell'}\sigma_{j_k}^{\varepsilon_{k}}$ is braid isotopic to $B$ and thus closes to the desired link. The value of $z_k$ is set to $\varepsilon_k$.

\ \\

\textbf{The interpolation problem $(*)$}: Find a trigonometric polynomial $\tilde{A}:S^1\to\mathbb{C}$ such that $\tilde{A}(\rme^{\rmi t_k})=\tfrac{y_k}{\cos\left(\tfrac{t_k}{2}\right)}$ and $\tfrac{\partial \arg(\tilde{A})}{\partial t}(\rme^{\rmi t_k})=z_k$ for all $k\in\{1,2,\ldots,\ell'\}$. 

\ \\

Since 
\begin{equation}
\frac{\partial \arg(\tilde{A})}{\partial t}(\rme^{\rmi t_k})=\left.\left(\frac{\text{Re}(\tilde{A})\frac{\partial \text{Im}(\tilde{A})}{\partial t}-\text{Im}(\tilde{A})\frac{\partial \text{Re}(\tilde{A})}{\partial t}}{|\tilde{A}|^2}\right)\right|_{t=t_k},
\end{equation} the interpolation problem above can be written as an interpolation where the values of the data points correspond to values of the desired function $\tilde{A}$ and its first derivative. Such an interpolation always has a solution that can be found via explicit formulas such as the one in \cite{nathan}. The degree of the solution is equal to $\ell'$.

We then define $A(\rme^{\rmi t}):=\tilde{A}(\rme^{\rmi 2t})\cos(t)$, which satisfies $A(\rme^{\rmi t_k/2})=y_k$ and 
$\tfrac{\partial \arg(A)}{\partial t}(\rme^{\rmi t_k/2})$ has the same sign as $\varepsilon_k$ for all $k\in\{1,2,\ldots,\ell'\}$.

Since $A$ is odd, i.e., $A(\rme^{\rmi (t+\pi)})=-A(\rme^{\rmi t})$, it automatically also satisfies $A(\rme^{\rmi (t_{k}/2+\pi)})=-y_k$ and $\tfrac{\partial \arg(A)}{\partial t}(\rme^{\rmi (t_{k}/2+\pi)})$ also has the same sign as $\varepsilon_k$ for all $k\in\{1,2,\ldots,\ell'\}$.

\section{Weakly isolated singularities}\label{sec:proof}
In this section we prove that Algorithm 1 does what it is supposed to do: It constructs weakly isolated singularities with the desired link as the link of the singularity. Thereby we establish a proof of Theorem \ref{thm:ak}. 

We use the same notation as in the previous sections. $\tilde{F}_C$ is a generic trigonometric parametrization of the singular braid $B_{sing}=\underset{k=1}{\overset{\ell'}{\prod}}\tau_{j_k}$. Let $\varepsilon_k\in\{\pm 1\}$, $k=1,2,\ldots,\ell'$ and let $A:S^1\to\mathbb{C}$ be the trigonometric polynomial found via the interpolation procedure in Step 5 of Algorithm 1. Let $g$, $p_k$ and $f=p_k+r^mA(\rme^{\rmi t})$ be defined as in the description of Algorithm 1.

\begin{lemma}
For every fixed and sufficiently small $r_*>0$ the vanishing set of $f|_{r=r_*}:\mathbb{C}\times S^1\to\mathbb{C}$ is the closed braid $\underset{k=1}{\overset{\ell'}{\prod}}\sigma_{j_k}^{\varepsilon_k}$.
\end{lemma}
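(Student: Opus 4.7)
The plan is to reformulate the zero equation as a small perturbation of $\{g=0\}$ and analyse each singular crossing of $B_{sing}^2$ separately. Setting $\epsilon := r_*^{m-2ks}$ and $w := u/r_*^{2k}$, the condition $f|_{r=r_*}(u,e^{\rmi t})=0$ becomes $g(w,e^{\rmi t})=-\epsilon\,A(e^{\rmi t})$ with $\epsilon\to 0$ as $r_*\to 0$. The unperturbed zero set is the singular braid $B_{sing}^2$, with $2\ell'$ singular crossings located at $t=t_k/2$ and $t=t_k/2+\pi$ for $k=1,\ldots,\ell'$. Using the holomorphicity of $g$ in $w$ and the implicit function theorem at simple roots, I would first argue that away from these $2\ell'$ values of $t$ the zero set of $g+\epsilon A$ consists of $s$ smooth strands close to those of $B_{sing}^2$.

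Near a singular crossing at $(w_k,t_*)$ with $t_*\in\{t_k/2,\,t_k/2+\pi\}$, factor locally $g(w,e^{\rmi t})=h(w,t)(w-a_1(t))(w-a_2(t))$ with $H:=h(w_k,t_*)\neq 0$ and $a_1,a_2$ meeting transversely at $w_k$. To leading order near $(w_k,t_*)$, the defining equation is a quadratic in $w$ with solutions $w=\bar a(t)\pm z(t)$ where $\bar a=(a_1+a_2)/2$ and
\begin{equation}
z(t)^2 \;\approx\; \left(\tfrac{a_2(t)-a_1(t)}{2}\right)^2 \;-\; \frac{\epsilon\,A(e^{\rmi t})}{H}.
\end{equation}
A genuine crossing in the real-part projection occurs exactly where $\text{Re}(z(t))=0$, i.e., where $z^2$ is real and non-positive. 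At $t=t_*$, $z(t_*)^2=-\epsilon A(e^{\rmi t_*})/H$. The sign hypothesis that $y_k/\cos(t_k/2)$ equals the sign of the critical point (which is $-\sign(H)$, as follows from the local factorisation), combined with $A(e^{\rmi t_k/2})=y_k$ and the oddness $A(e^{\rmi(t_k/2+\pi)})=-y_k$, yields $\sign(z(t_*)^2)=\sign(\cos(t_*))$. Since $\cos(t_k/2)$ and $\cos(t_k/2+\pi)$ have opposite signs, and neither vanishes because $t_k\neq\pi$, exactly one of the two copies of each singular crossing survives as an honest crossing while the other becomes a near-miss. The vanishing set therefore has exactly $\ell'$ crossings.

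Finally I would match the sign of each surviving crossing to $\varepsilon_k$. Expanding $z(t)^2$ to first order in $\tau=t-t_*$ shows that $\text{Im}(z^2)$ has a simple zero at $\tau=0$ with slope $-\epsilon A_I'(t_*)/H$, where $A=A_R+\rmi A_I$, so $\text{Re}(z)$ changes sign across $\tau=0$ with sign dictated by that slope. Tracking the imaginary parts of the two branches $\bar a\pm z$ to identify the overcrossing strand just before the crossing, and invoking the interpolation condition $\sign(\partial\arg(A)/\partial t\,(e^{\rmi t_*}))=\varepsilon_k$---which holds at both $t_k/2$ and $t_k/2+\pi$ by the oddness of $A$---shows that the realised crossing has sign $\varepsilon_k$. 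Since the surviving times $t_*$ inherit the cyclic order of the $t_k$s (possibly after a cyclic rotation), the vanishing set is the closed braid $\prod_{k=1}^{\ell'}\sigma_{j_k}^{\varepsilon_k}$. The hard part will be the sign bookkeeping: one must combine the sign of the critical point, the reality of $g$, the sign of $\cos(t_*)$, and the oddness of $A$---all carefully engineered by Step~5 of the algorithm---so that precisely the prescribed resolution survives with the prescribed sign.
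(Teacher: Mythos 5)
Your argument is correct and reaches the same conclusion, but the route is genuinely different from the paper's. The paper constructs a fiber-preserving diffeomorphism $h$ that brings $(g_t)^{-1}(\mathbb{R})$ into a normal form (the real line plus $s-1$ vertical segments through the critical points) and then reads off the position of the two preimage points $(g_t(h))^{-1}(-\delta A)$ relative to these lines, arguing geometrically from the pictures. You instead work locally at each singular crossing, factor $g$ as $H(w-a_1)(w-a_2)$ up to higher order, reduce to the explicit quadratic $u^2=z^2$ with $z^2=d^2-\varepsilon A/H$, and deduce everything from the sign and imaginary part of $z^2$. The local-quadratic computation is more elementary and avoids having to construct (and justify the existence of) the global diffeomorphism $h$; on the other hand, the paper's normal form makes the pictorial reasoning about which quadrant each root occupies immediate, which it then leans on for the final crossing-sign bookkeeping that you leave partially sketched.

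There is one point where your proposal is in fact more careful than the paper. Since the interpolation condition fixes the sign of $\tilde{A}(\rme^{\rmi t_k})=y_k/\cos(t_k/2)$ (not of $y_k$ itself) to agree with the critical value, while $A(\rme^{\rmi t_k/2})=y_k$, one has $\sign\bigl(A(\rme^{\rmi t_k/2})\bigr)=\sign\bigl(\cos(t_k/2)\bigr)\cdot\sigma$, which equals $\sigma$ only when $t_k<\pi$. So, as your computation $\sign(z(t_*)^2)=\sign(\cos t_*)$ shows, the surviving genuine crossing for a given $k$ is not always the copy at $t_k/2+\pi$: for crossings of $B_{sing}$ with $t_k>\pi$ the surviving copy is the one at $t_k/2$. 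The paper's statement that \emph{all} first-half singular crossings of $B_{sing}^2$ are resolved into non-crossings is therefore not literally accurate, but the net effect --- exactly one copy of each crossing survives, with sign $\varepsilon_k$, and the surviving crossings appear in a cyclic rotation of the order $t_1<\cdots<t_{\ell'}$ --- still yields the same closed braid, which is what the lemma asserts. Your explicit ``possibly after a cyclic rotation'' caveat is exactly the right way to handle this.
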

\begin{proof}
The vanishing set of $f|_{r=r_*}$ corresponds (up to an overall scaling in the $u$-coordinate) to the vanishing set of $g+r_*^{m-2ks}A$, which is equal to $(g_t)^{-1}(-r_*^{m-2ks}A(\rme^{\rmi t}))$.

Since $g_t$ is monic and real and its critical points are distinct for all values of $t\in[0,2\pi]$, there is a diffeomorphism $h:\mathbb{C}\times S^1\to\mathbb{C}\times S^1$ that is the identity outside of $\{(u,\rme^{\rmi t}):|u|<R\}$ for some $R>0$ and that preserves the fibers of the projection map onto the second factor $\mathbb{C}\times S^1\to S^1$, and a disk $D$ such that $(g_t(h))^{-1}(\mathbb{R})\cap D$ is the union of the real line ($\{(u,\rme^{\rmi t}):\text{Im}(u)=0\}\cap D$) and $s-1$ straight, ``vertical'' lines orthogonal to the real line for every $t\in[0,2\pi]$. This is displayed in Figure \ref{fig:fol}. Note that the vertical lines intersect the real line in the critical points of $g_t$. Since the critical points vary with $t$, so do the vertical lines.

\begin{figure}
\centering
\labellist
\large
\pinlabel $\mathbb{R}$ at 1950 1150
\pinlabel $D$ at 1250 550
\pinlabel $(g_t(h))^{-1}(\mathbb{R})$ at -200 1600
\endlabellist
\includegraphics[height=6cm]{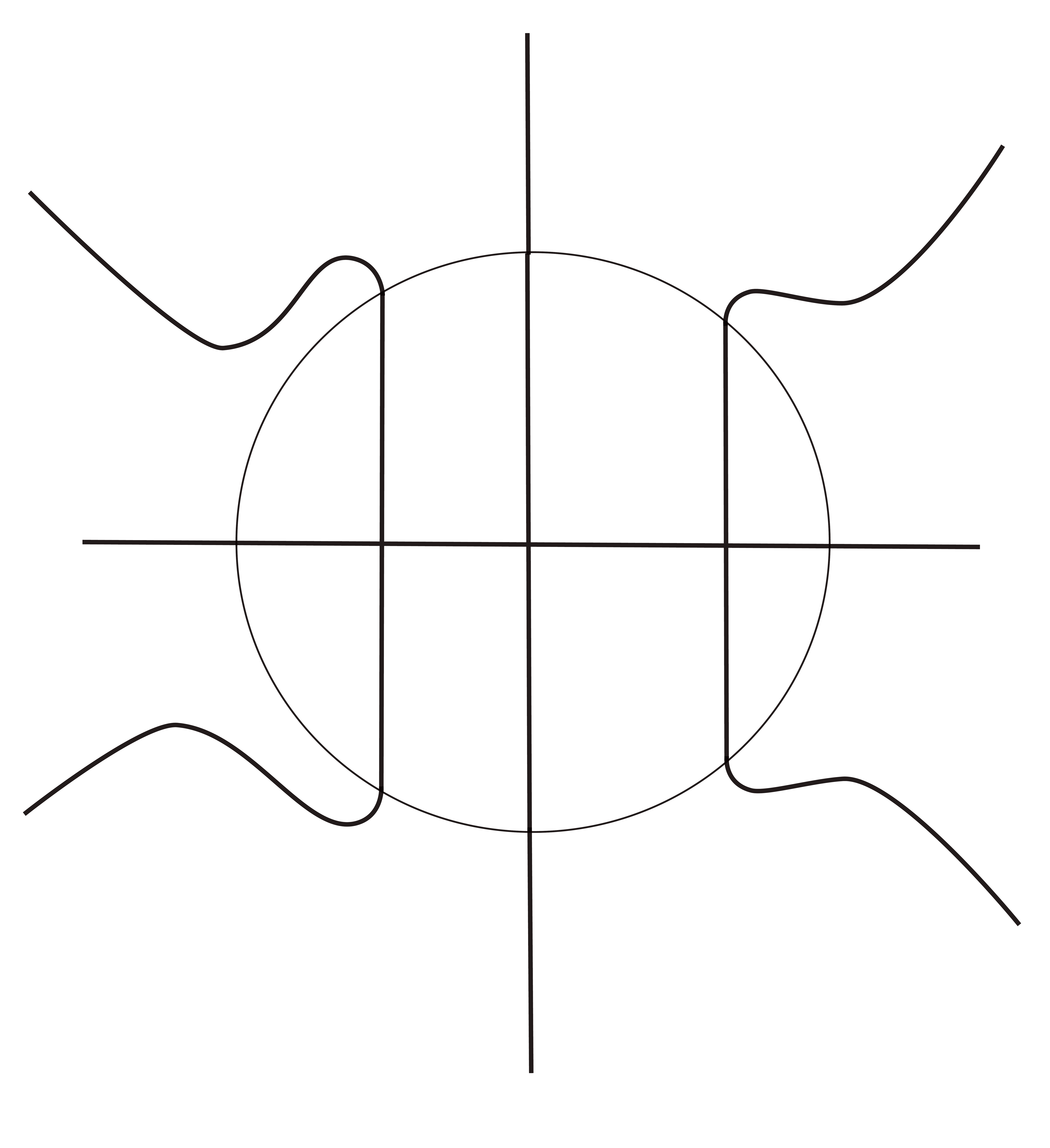}
\caption{The preimage $(g_t(h))^{-1}(\mathbb{R})$ in the complex plane for a fixed value of $t\in[0,2\pi]$ and the disk $D$.\label{fig:fol}}
\end{figure}

Let $t_k$, $k=1,2,\ldots,2\ell'$, denote the values of $t$ for which there are crossings of $B_{sing}^2$. Note that for $k\leq\ell'$ these differ from the values of $t_k$ in the previous section, corresponding to the crossings of $B_{sing}$, by a factor of $1/2$. By symmetry we have $t_{k+\ell'}=t_k+\pi$. Figure \ref{fig:motion} shows subsets of the complex plane in a neighbourhood of a singular crossing at $t=t_k$, and at $t=t_k+\pi$. The black lines are the preimage set $(g_t(h))^{-1}(\mathbb{R})$ with the horizontal line being a segment of the real line. The red points are the roots of $g_t$ at values $t=t_k-2\varepsilon$, $t=t_k-\varepsilon$, $t=t_k$, $t=t_k+\varepsilon$ and $t=t_k+2\varepsilon$. By symmetry the corresponding roots at $t_{k+\ell'}$ are the same. The blue points indicate the roots of $g_t(h)+\delta A(\rme^{\rmi t})$, which are the preimage points $(g_t(h))^{-1}(-\delta A(\rme^{\rmi t}))$, for some small $\delta>0$.

\begin{figure}
\centering
\labellist
\large
\pinlabel a) at 100 2400
\footnotesize 
\pinlabel $t=t_k-2\varepsilon+\pi$ at 400 2200
\pinlabel $t=t_k-2\varepsilon$ at 400 1000
\endlabellist
\includegraphics[height=4.5cm]{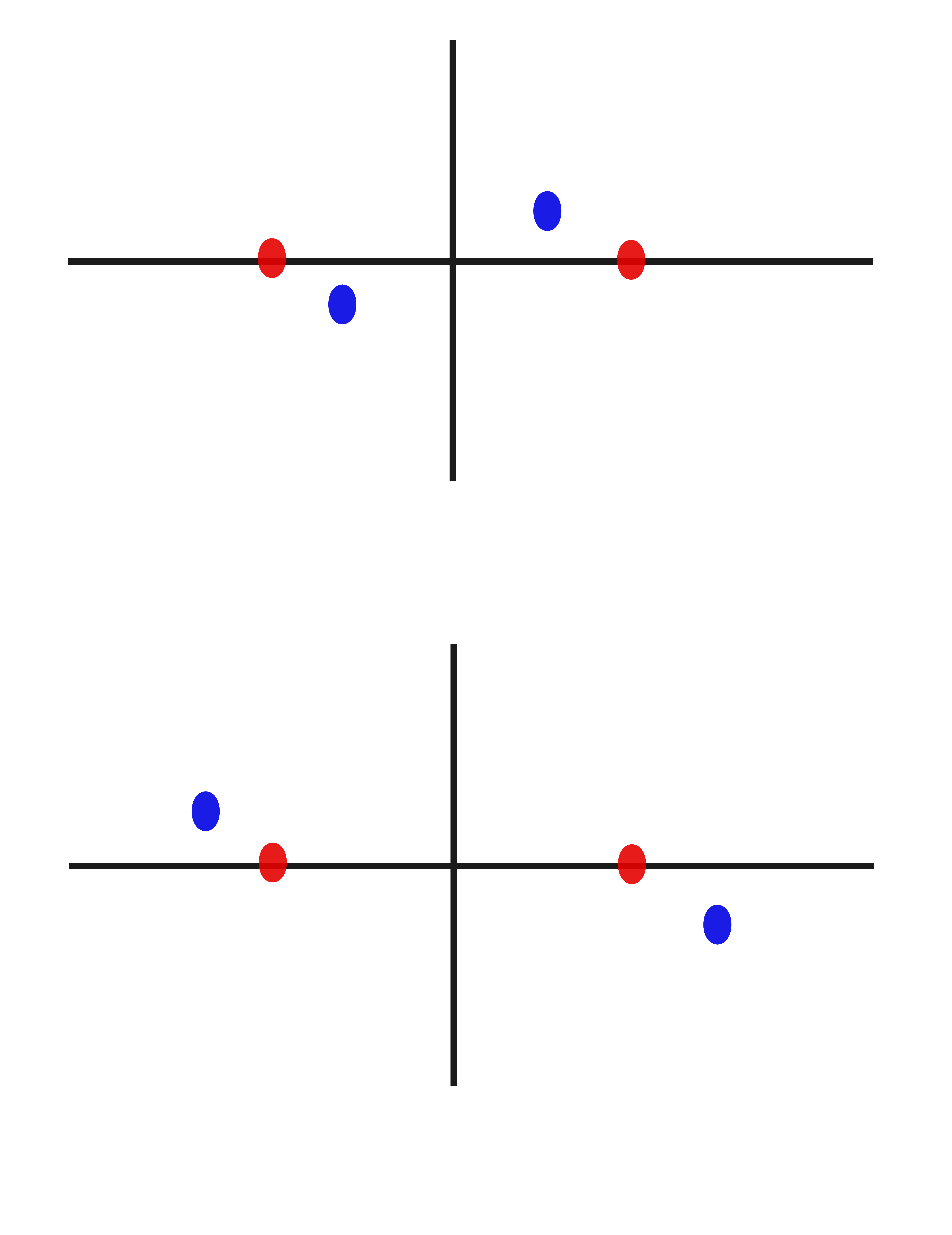}
\labellist
\large
\pinlabel b) at 100 2400
\footnotesize 
\pinlabel $t=t_k-\varepsilon+\pi$ at 400 2200
\pinlabel $t=t_k-\varepsilon$ at 400 1000
\endlabellist
\includegraphics[height=4.5cm]{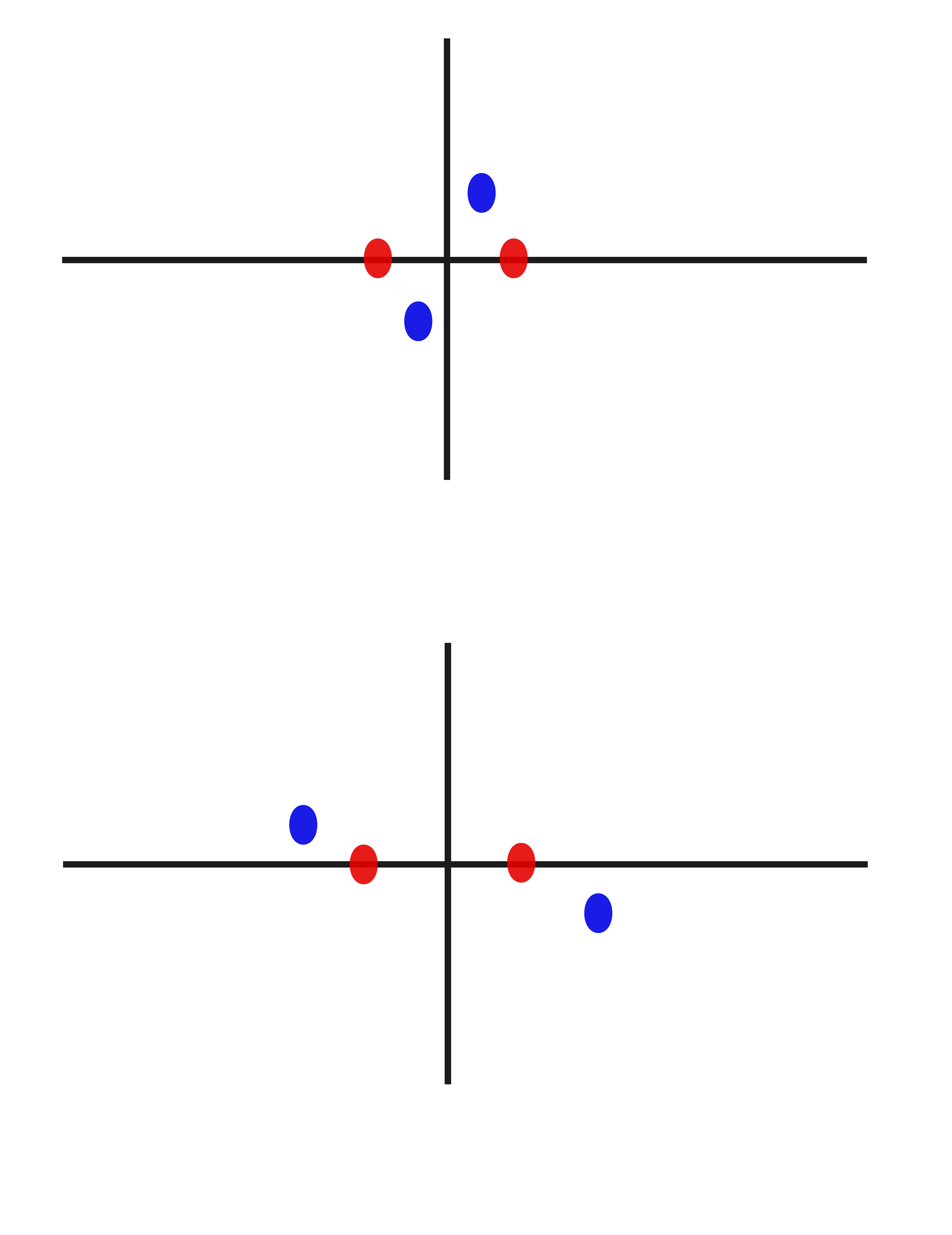}
\labellist
\large
\pinlabel c) at 100 2400
\footnotesize 
\pinlabel $t=t_k+\pi$ at 400 2200
\pinlabel $t=t_k$ at 400 1000
\endlabellist
\includegraphics[height=4.5cm]{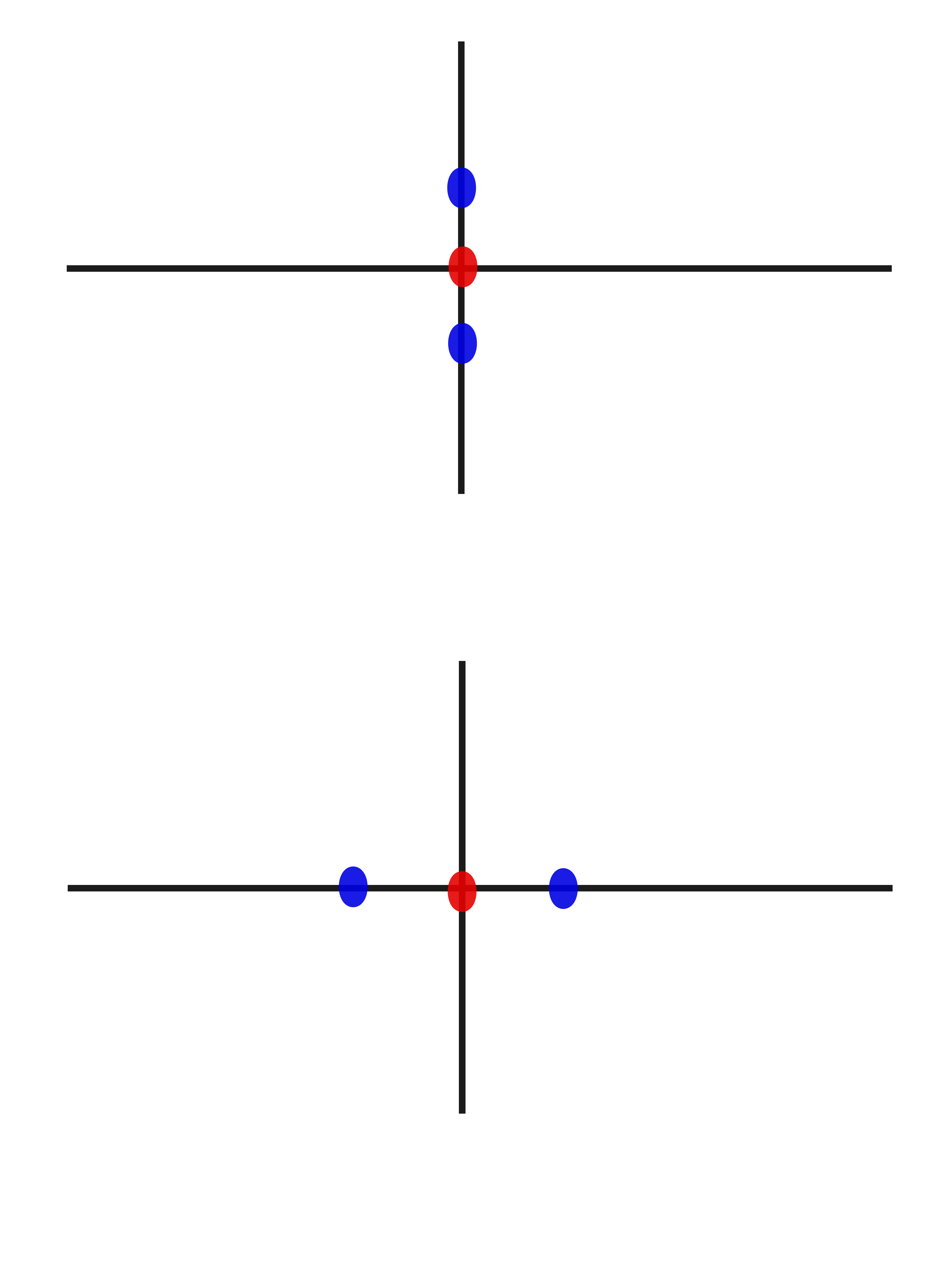}
\labellist
\large
\pinlabel d) at 100 2400
\footnotesize 
\pinlabel $t=t_k+\varepsilon+\pi$ at 400 2200
\pinlabel $t=t_k+\varepsilon$ at 400 1000
\endlabellist
\includegraphics[height=4.5cm]{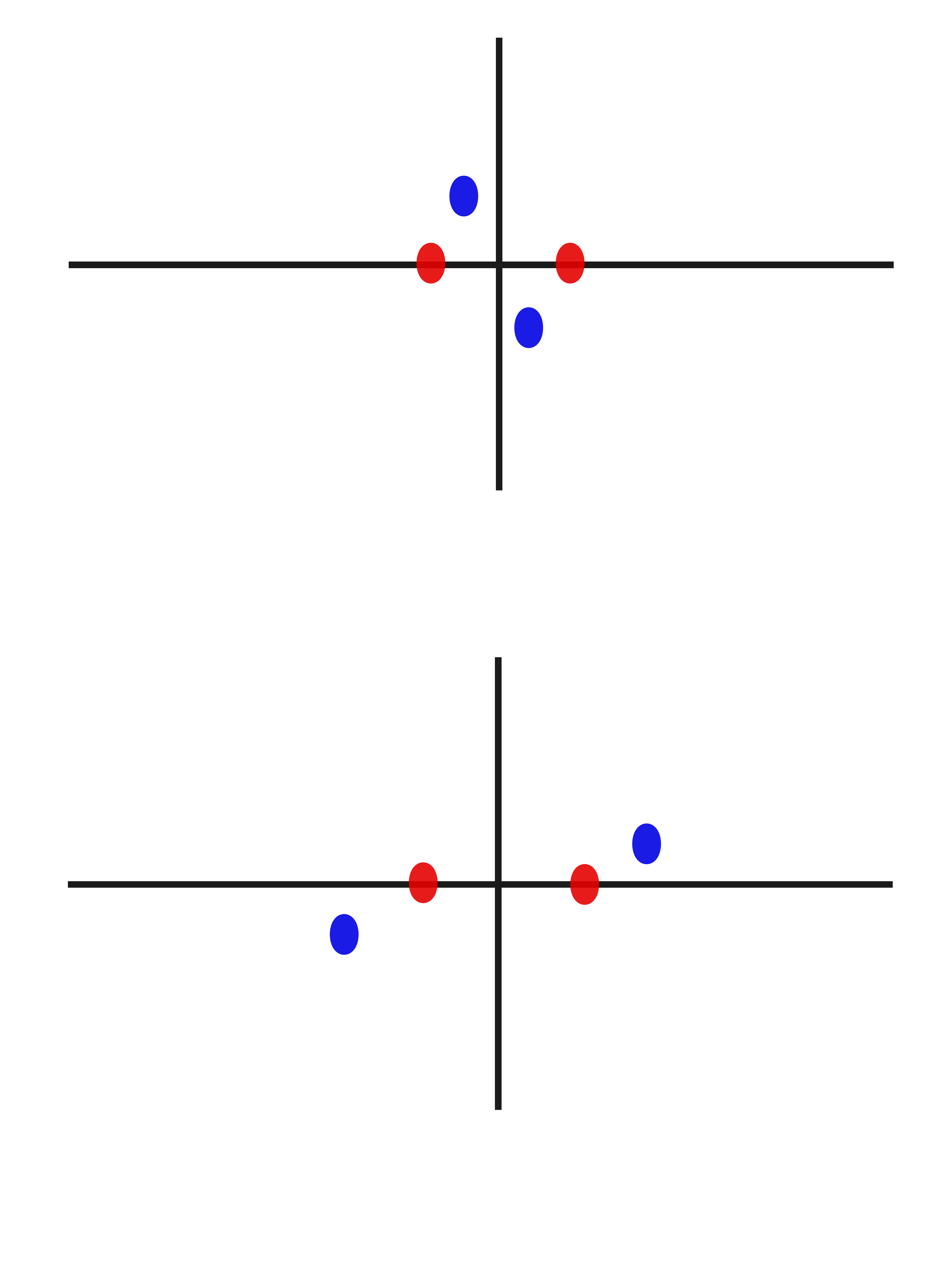}
\labellist
\large
\pinlabel e) at 100 2400
\footnotesize 
\pinlabel $t=t_k+2\varepsilon+\pi$ at 400 2200
\pinlabel $t=t_k+2\varepsilon$ at 400 1000
\endlabellist
\includegraphics[height=4.5cm]{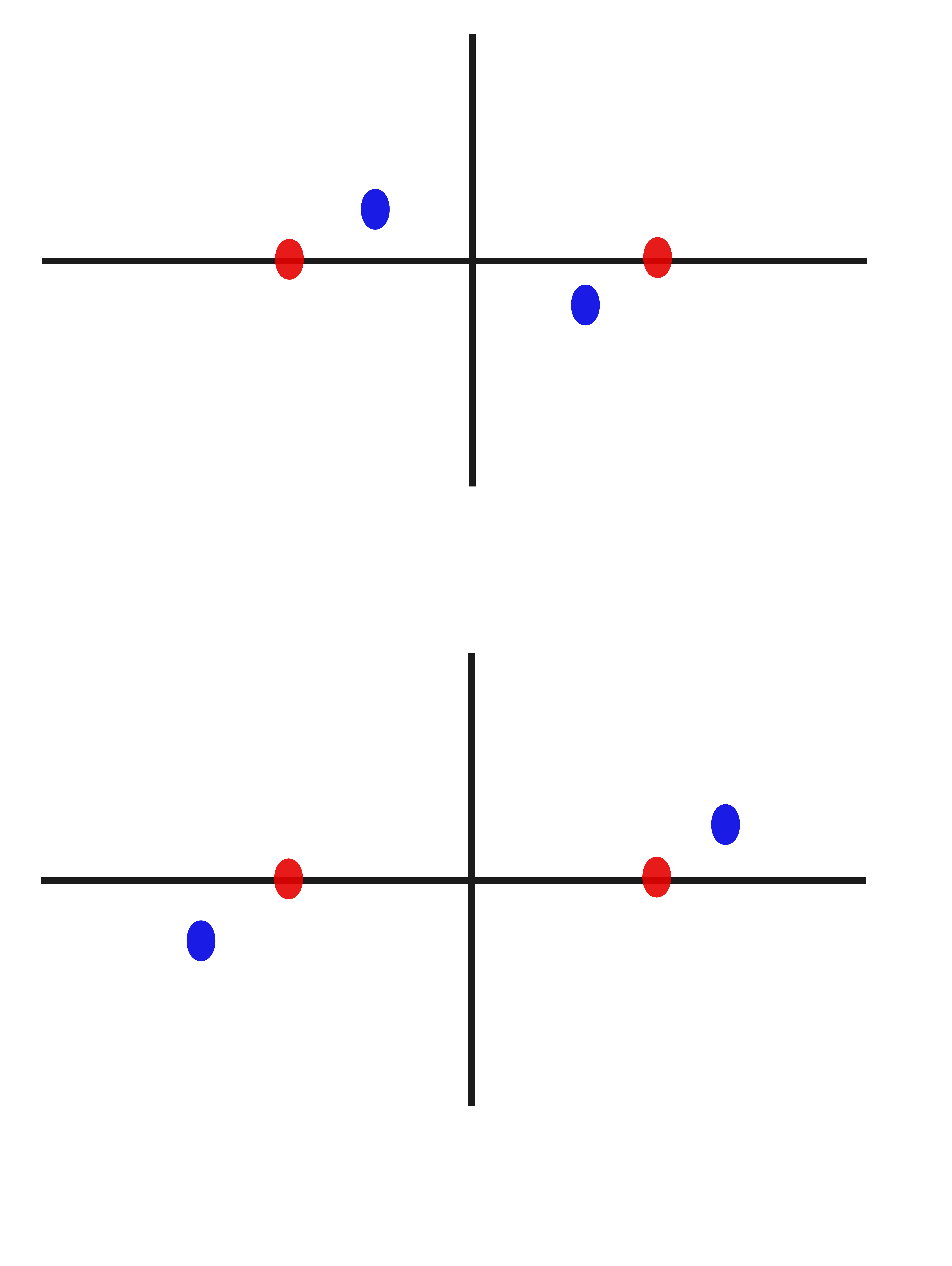}
%\labellist
%\footnotesize
%\pinlabel $t=t_*-2\varepsilon$ at 45 2830
%\pinlabel $t=t_*-\varepsilon$ at 1975 2830
%\pinlabel $t=t_*$ at 3675 2830
%\pinlabel $t=t_*+\varepsilon$ at 1025 830
%\pinlabel $t=t_*+2\varepsilon$ at 3875 830
%\pinlabel $t=t_*-2\varepsilon+\pi$ at 25 4000
%\pinlabel $t=t_*-\varepsilon+\pi$ at 2075 4000
%\pinlabel $t=t_*+\pi$ at 3775 4000
%\pinlabel $t=t_*+\varepsilon+\pi$ at 925 1830
%\pinlabel $t=t_*+2\varepsilon+\pi$ at 3975 1830
%\endlabellist
%\includegraphics[height=8cm]{Eder_ak_plane2}
\caption{The motion of the roots of $g_t$ (in red) and of $g_t+r^{m-2ks}A$ (in blue) in the complex plane in a neighbourhood of singular crossings at $t=t_k$ and $t=t_k+\pi$, $k\in\{1,2,\ldots,\ell'\}$. For each subfigure the lower part shows the behaviour near $t=t_k$ and the upper part shows the behaviour near $t=t_k+\pi$. a) At $t=t_k-\varepsilon$ and $t=t_k+\pi-\varepsilon$. b) At $t=t_k-\varepsilon/2$ and $t=t_k+\pi-\varepsilon/2$. c) At $t=t_k$ and $t=t_k+\pi$. d) At $t=t_k+\varepsilon/2$ and $t=t_k+\pi+\varepsilon/2$. e) At $t=t_k+\varepsilon$ and $t=t_k+\pi+\varepsilon$. \label{fig:motion}}
\end{figure}

By construction $A(\rme^{\rmi t_k})$, $k=1,2,\ldots,\ell'$, is real and has the same sign as the critical point associated with the crossing at $t=t_k$. Hence the two preimage points $(g_{t_k}(h))^{-1}(-\delta A(\rme^{\rmi t_k}))$ lie on the real line on opposite sides of the vertical line for all values of $\delta=r^{m-2ks}>0$ as indicated in the lower part of Figure \ref{fig:motion}c).

Since the derivative of the argument of $A$ is non-zero at $t=t_k$, there is a neighbourhood $U$ of $t_k$ independent of $\delta$ such that $t=t_k$ is the only point in the neighbourhood where $\arg(\delta A)$ is 0 or $\pi$. Thus $t=t_k$ is the only point in $U$, for which the roots of $g_t+\delta A(\rme^{\rmi t})$ lie on $g_t^{-1}(\mathbb{R})$. The two roots (which are the preimage points $(g_{t_k}(h))^{-1}(-\delta A(\rme^{\rmi t_k}))$) lie on opposite sides of the vertical line at $t=t_k$ and cannot cross the vertical line while $t$ is in $U$.

%Fix a small value of $\delta$. Then there is a neighbourhood of $t_k$ such that the two preimage points $(g_{t}(h))^{-1}(-rA(\rme{\rmi t}))$ lie on opposite sides of the vertical line for all values of $t$ in the neighbourhood. Then they also lie on opposite sides of the vertical line for all small values of $r$ and all values of $t$ in the neighbourhood.

Recall that a crossing only occurs when two strands have the same $\text{Re}(u)$-coordinate. Since the two preimage points remain on opposite sides of the vertical line throughout $U$, there is no crossing between the strands that are formed by the two preimage points $(g_{t}(h))^{-1}(-\delta A(\rme^{\rmi t}))$ in a neighbourhood of the original crossing for all sufficiently small $\delta>0$.

Thus all crossings at $t=t_k$, $k=1,2,\ldots,\ell'$, are resolved as in Figure \ref{fig:resolution}a), that is, there are no more crossings in the lower half of the braid.

\begin{figure}
\centering
\labellist
\large
\pinlabel a) at 100 2600
\pinlabel b) at 100 1600
\pinlabel $\varepsilon_k=1$ at 1300 1300
\pinlabel $\varepsilon_k=-1$ at 1300 400
\endlabellist
\includegraphics[height=7.5cm]{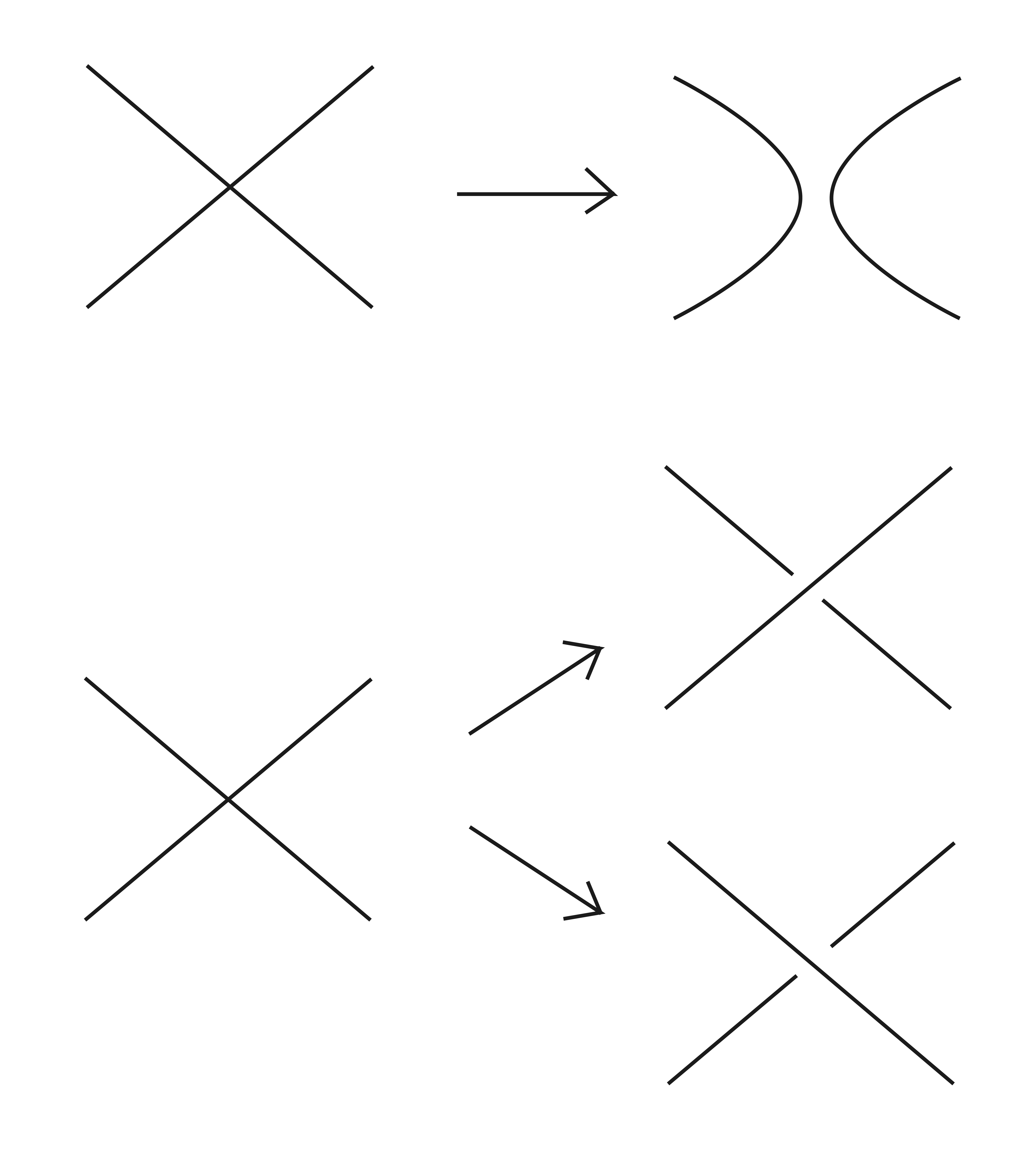}
\caption{Resolution of singular crossings. a) A singular crossing is resolved into two strands without a crossing. b) A singular crossing is resolved into a classical crossing with sign $\varepsilon_k$.\label{fig:resolution}}
\end{figure}

By symmetry $A(\rme^{\rmi t_{k+\ell'}})=A(\rme^{\rmi (t_k+\pi)})$, $k=1,2,\ldots,\ell'$, is real and has the opposite sign as the critical point associated with the crossing at $t=t_{k+\ell'}$. Therefore, the two preimage points $(g_{t}(h))^{-1}(-\delta A(\rme^{\rmi t}))$ both lie on the vertical line, one ``above'' (with positive imaginary part) the real line and one ``below'' (negative imaginary part), see the upper part of Figure \ref{fig:motion}c). Furthermore, we know that the sign of $\tfrac{\partial \arg(A)}{\partial t}$ is the sign of the desired crossing. Suppose that that sign is positive. Then the point above the real line is moving from right to left and the point below is moving from left to right, relative to the motion of the vertical line. That is, there is an $\varepsilon>0$ such that for all $t\in(t_{k+\ell'}-\varepsilon,t_{k+\ell'})$ the point above the real line is in the upper right quadrant and the point below is in the lower left quadrant, while for all $t\in(t_{k+\ell'},t_{k+\ell'}+\varepsilon)$ the point above the real line is in the upper left quadrant and the point below the real line is in the lower right quadrant.

Recall again that there is a crossing if and only if the two points have the same $\text{Re}(u)$-coordinate. This means that in $(t_{k+\ell'}-\varepsilon,t_{k+\ell'}+\varepsilon)$ there is a unique crossing, which occurs at $t=t_{k+\ell'}$. By our sign convention the sign of this crossing is positive as the point below the real line passing the point above the real line from left to right.

Likewise, if the desired sign $\varepsilon_{k}$ of the crossing is negative, then the point above the real line is moving from left to right and the point below is moving from right to left. That is, there is an $\varepsilon>0$ such that for all $t\in(t_{k+\ell'}-\varepsilon,t_{k+\ell'})$ the point above the real line is in the upper left quadrant and the point below is in the lower right quadrant, while for all $t\in(t_{k+\ell'},t_{k+\ell'}+\varepsilon)$ the point above the real line is in the upper right quadrant and the point below the real line is in the lower left quadrant. Thus there is a unique crossing at $t=t_{k+\ell'}$ and it has a negative sign.

In either case we obtain a classical crossing of the required sign as in Figure \ref{fig:resolution}b). Note that the $\varepsilon$-neighbourhood can be chosen independently of $\delta$ and thus independent of $r$, so that we have the correct crossing for all small values of $r$. Outside of the discussed neighbourhoods of $t_k$, $k=1,2,\ldots,2\ell'$, we can guarantee that there are no crossings when $r$ is sufficiently small. It follows that the zeros of $g_t(h)+r^{m-2ks}A(\rme^{\rmi t})$ form a closed braid in $\mathbb{C}\times S^1$ as $t$ varies from $0$ to $2\pi$.

Since the singular crossings in the first half of $B_{sing}^2$ at $t=t_k$, $k=1,2,\ldots,\ell'$, are all resolved into strands without crossings and the singular crossings in the second half of $B_{sing}^2$ at $t=t_k+\pi$, $k=1,2,\ldots,\ell'$, are resolved as desired, i.e., $\tau_{j_k}\mapsto \sigma_{j_k}^{\varepsilon_k}$, the braid formed by the roots of $g_t(h)+r^{m-2ks}A(\rme^{\rmi t})$ is represented by the word $\underset{k=1}{\overset{\ell'}{\prod}}\sigma_{j_k}^{\varepsilon_k}$, which by construction is braid isotopic to the braid $B$ that we used as input. Since $h$ is a diffeomorphism that preserves the fibers of the projection map onto the second factor $\mathbb{C}\times S^1\to S^1$, the roots of $g_t+r^{m-2ks}A(\rme^{\rmi t})$ form a braid that is isotopic to $B$ as a closed braid.
\end{proof}

\begin{lemma}
The constructed semiholomorphic polynomial $f$ has a weakly isolated singularity whose link is the closure of the given braid $B$.
\end{lemma}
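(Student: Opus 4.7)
The plan is to reduce the weak-isolation claim to a single statement about the zero set of $f_u := \partial f/\partial u$, and then to invoke the previous lemma for the link identification. Because $f$ is semiholomorphic, the first two columns of the real Jacobian of $f$ form a $2 \times 2$ block with determinant $|f_u|^2$, so $Df(p)$ has full rank whenever $f_u(p) \neq 0$. It therefore suffices to show that the origin $O$ is the only zero of $f$ at which $f_u$ vanishes. The conditions $f(O) = 0$ and $Df(O) = 0$ are immediate, since every monomial of $p_k + r^m A$ has total real degree at least $s \geq 2$: the lowest-degree term of $p_k$ is $u^s$, and every monomial of $r^m A$ is radially weighted homogeneous of degree $m > 2ks$.

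The first computation is
\begin{equation*}
f_u = r^{2k(s-1)}\, g_u\bigl(\tfrac{u}{r^{2k}}, \rme^{\rmi t}\bigr),
\end{equation*}
which vanishes precisely when $r = 0$ (in which case $f(u,0) = u^s$ forces $u = 0$) or when $u = c(t)\, r^{2k}$ for some critical point $c(t)$ of $g(\cdot, \rme^{\rmi t})$. In the latter case the equation $f = 0$ becomes
\begin{equation*}
g(c(t), \rme^{\rmi t}) + r^{m-2ks}\, A(\rme^{\rmi t}) = 0.
\end{equation*}
The polynomial $g$ has real coefficients because its roots $\tilde F_C$ are real-valued, so $g(c(t), \rme^{\rmi t}) \in \mathbb{R}$. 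Taking imaginary parts therefore forces $\mathrm{Im}(A(\rme^{\rmi t})) = 0$.

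The next step is to catalogue the finite zero set of $t \mapsto \mathrm{Im}(A(\rme^{\rmi t})) = \mathrm{Im}(\tilde A(\rme^{\rmi 2t}))\cos t$. It contains each singular value $T_k = t_k/2$ and $T_k + \pi$ of $g_t$, at which $A$ takes the nonzero real values $y_k$ and $-y_k$. The interpolation condition $\partial_t \arg(\tilde A)(\rme^{\rmi t_k}) \neq 0$, together with $\cos(t_k/2) \neq 0$ (guaranteed by the shift ensuring $t_k \neq \pi$), makes each such $T_k$ an isolated zero of $\mathrm{Im}(A)$. Every remaining zero — arising either from $\cos t = 0$ or from an accidental zero of $\mathrm{Im}(\tilde A)$ — occurs at a value $t_*$ that is not a singular value of $g_t$.

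The main obstacle is the case analysis at this zero set. At $t = T_k$ or $T_k + \pi$, the colliding critical point $c_k$ satisfies $g(c_k, \rme^{\rmi T_k}) = 0$, so the real part of the equation reduces to $\pm r^{m-2ks} y_k = 0$, impossible for $r > 0$. At any other zero $t_*$, each critical point $c$ of $g_{t_*}$ has $g(c, \rme^{\rmi t_*})$ bounded away from $0$, and the finiteness of the zero set provides a uniform positive lower bound; hence for sufficiently small $r$ the real part $g(c, \rme^{\rmi t_*}) + r^{m-2ks} \mathrm{Re}(A(\rme^{\rmi t_*}))$ is also nonzero. This yields a punctured neighbourhood of $O$ free of zeros of $f$ at which $f_u$ vanishes, proving weak isolation. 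The link identification is then inherited from the previous lemma: for small $\rho$, $f^{-1}(0) \cap (\mathbb{C} \times \rho S^1)$ is the closure of a braid isotopic to $B$, and the standard isotopy argument from \cite{bodepoly} identifies this closed braid with $f^{-1}(0) \cap S^3_\rho = L_f$.
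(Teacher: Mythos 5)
Your proof is correct and takes a genuinely different route from the paper for the weak-isolation half of the statement. The paper derives weak isolation as an immediate corollary of the previous lemma: the roots of $f|_{|v|=r}$ form a braid for small $r$, so in particular they are simple, so $\partial f/\partial u\neq 0$ on $f^{-1}(0)\setminus\{O\}$. You instead prove this directly: a zero of $f_u$ with $r>0$ must satisfy $u=c(t)r^{2k}$ for a critical point $c(t)$ of $g_t$; then $f=0$ forces $g(c(t),\rme^{\rmi t})=-r^{m-2ks}A(\rme^{\rmi t})$, whose left-hand side is real (since $g$ has real coefficients and real critical points), so $\mathrm{Im}(A(\rme^{\rmi t}))=0$; this is a finite set of parameters, and at each of them the real part is obstructed --- either because $\mathrm{Re}(A)=\pm y_k\neq 0$ at the singular-crossing parameters (where the colliding critical point has critical value zero), or because the nonzero critical values of $g_{t_*}$ are uniformly bounded away from zero and dominate $r^{m-2ks}\mathrm{Re}(A)$ for small $r$. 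This buys you a self-contained proof of weak isolation that does not lean on the delicate braid analysis of the previous lemma; you only invoke that lemma, as the paper does, for the identification of the link as the closure of $B$ via the \cite{bodepoly} isotopy. The paper's version is shorter but couples the two claims more tightly.

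One small presentational point: at $t=T_k$ (or $T_k+\pi$) you only discuss the colliding critical point $c_k$, but $g_{T_k}$ also has non-colliding critical points with nonzero critical values. Those are in fact handled by the same uniform-lower-bound argument you give for the ``other zeros $t_*$'' --- since the finite collection of pairs $(t_*,c)$ with $g(c,\rme^{\rmi t_*})\neq 0$ admits a uniform bound --- but it would be cleaner to state the dichotomy over all pairs $(t_*,c)$ with $t_*$ a zero of $\mathrm{Im}(A)$ and $c$ a critical point of $g_{t_*}$, rather than splitting first by $t_*$.
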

\begin{proof}
At $v=0$ we have that $f(u,v)=u^s$, so that the origin is the only critical point with $v=0$.

We have shown that the roots of $f|_{|v|=r}$ form a braid as $t$ varies from $0$ to $2\pi$ for small values of $r>0$. In particular, all roots of $f|_{|v|=r}$ are simple, which means that $\tfrac{\partial f}{\partial u}\neq 0$ on $f^{-1}(0)\backslash \{O\}$. Thus $f$ has a weakly isolated singularity at the origin.
 
We also have that $f^{-1}(0)\cap(\mathbb{C}\times rS^1)$ is isotopic to the closed braid $B$ for all sufficiently small values of $r$. As $r$ goes to zero, the $u$-coordinates of all strands converges to zero. As in \cite{bodepoly} we can construct an explicit isotopy between the projection of $f^{-1}(0)\cap(\mathbb{C}\times rS^1)$ to $S_r^3$ and $f^{-1}(0)\cap S_r^3$ for small values of $r$, which shows that the closure of $B$ is the link of the singularity.
\end{proof}

\section{Upper bounds on the degree}\label{sec:bounds}

In this section we prove Theorem \ref{thm:bound} and Corollary \ref{cor}. The proof of the upper bound on the degree of the constructed polynomials is very similar to the one of the bound obtained in \cite{bodepoly}.
\begin{proof}
Since $F_C$ is found via trigonometric interpolation, its degree (as a trigonometric polynomial) is equal to $\left\lfloor\tfrac{x}{2}\right\rfloor$, where $x$ is the number of data points used in the interpolation and $\left\lfloor y\right\rfloor$ is the floor function that maps any real number $y$ to the largest integer less than or equal to $y$. As in \cite{bodepoly} we need $s_C\ell$ data points for the interpolation for $F_C$, so that the degree of $F_C$ is $\left\lfloor\tfrac{s_C\ell}{2}\right\rfloor$. (In \cite{bodepoly} this was erroneously stated as $\left\lfloor\tfrac{s_c\ell-1}{2}\right\rfloor$.)

We can assume that $\ell>1$, since the closure of the braid is not an unknot. As observed in Section \ref{sec:generic} the degree of $\tilde{F}_C$ is equal to the degree of $F_C$. The degree of $g$ is $\underset{C\in\mathcal{C}}{\sum}\max\{s_C,2\deg(F_C)\}\leq s\ell$. Thus $k=\left\lceil\ell/2\right\rceil$ is a choice that guarantees that $p_k$ is a polynomial, where $\left\lceil y\right\rceil$ is the smallest integer bigger than or equal to $y$.

The degree of $p_k$ is then equal to $2ks\leq s(\ell+1)$.

The trigonometric polynomial $\tilde{A}$ is found via trigonometric interpolation, where for every singular crossing of $B_{sing}$ there is one data point for the value of $\tilde{A}$ and one data point for its derivative. The degree of $\tilde{A}$ is then equal to $\ell'$, where $\ell'$ is the number of singular crossings of $B_{sing}$ \cite{nathan}. Recall that $\ell'$ could be strictly greater than $\ell$.

Singular crossings of $B_{sing}$ correspond to intersections of the curves parametrized by $\tilde{F}_C$, which correspond to the zeros of certain complex polynomials on the unit circle as in \cite{bodepoly}. It was shown in \cite{bodepoly} that the number of singular crossings that involve two strands from the same component $C$ is bounded above by $(s_C+1)s_C\ell$. (Following the mistake in \cite{bodepoly} mentioned above this bound was originally stated as $(s_C+1)(s_C\ell-1)$)

It is also shown in \cite{bodepoly} that there are at most $\ell s_Cs_{C'}$ singular crossings with one strand from the component $C$ and the other strand from a component $C'\neq C$. The total number of singular crossings and the degree of $\tilde{A}$ is bounded from above by
\begin{align}
\deg(\tilde{A})&\leq \underset{C\in\mathcal{C}}{\sum}(s_C+1)s_C\ell+\frac{1}{2}\underset{C\in\mathcal{C}}{\sum}\underset{C'\neq C}{\sum}\ell s_C s_{C'}\nonumber\\
&= \underset{C\in\mathcal{C}}{\sum}(s_C+1)s_C\ell+\frac{1}{2}\underset{C\in\mathcal{C}}{\sum}\ell s_C (s-s_C)\nonumber\\
&=\underset{C\in\mathcal{C}}{\sum}\frac{1}{2}s_C^2\ell+s\ell(1+\frac{s}{2}).
\end{align}

We need to choose $m$, which will equal the degree of $f$, to be greater than the degree of $p_k$ and at least the degree of $A$. The degree of $A$ is $2\deg(\tilde{A})+1$ and the degree of $p_k$ was at most $s(\ell+1)$. Thus 
\begin{equation}
m=\underset{C\in\mathcal{C}}{\sum}s_C^2\ell+s\ell(2+s)+1>2s\ell>s(\ell+1).
\end{equation}
is a sufficient choice.

Therefore, the degree of $f$, may be chosen to be
\begin{equation}
\deg(f)\leq\underset{C\in\mathcal{C}}{\sum}s_C^2\ell+s\ell(2+s)+1.
\end{equation}
\end{proof}

If the closure of $B$ is a knot, we have that $|\mathcal{C}|=1$ and $s_C=s$. Corollary \ref{cor} follows immediately.

\end{document}